\renewcommand{\uppercasenonmath}[1]{}
\numberwithin{equation}{section} \theoremstyle{plain}
\newtheorem*{thm*}{Main Theorem}
\newtheorem{thm}{Theorem}[section]
\newtheorem{cor}[thm]{Corollary}
\newtheorem*{cor*}{Corollary}
\newtheorem{lem}[thm]{Lemma}
\newtheorem*{lem*}{Lemma}
\newtheorem*{fact*}{Fact}
\newtheorem*{nota*}{Notation}
\newtheorem{prop}[thm]{Proposition}
\newtheorem*{prop*}{Proposition}
\newtheorem*{rem*}{Remark}
\newtheorem*{observation*}{Observation}
\newtheorem*{exa*}{Example}
\newtheorem*{df*}{Definition}
\newtheorem*{con*}{Construction}
\renewcommand{\geq}{\geqslant}
\renewcommand{\leq}{\leqslant}
\begin{document}
\begin{center}
{\large  \bf Cofiniteness of generalized local cohomology modules for ideals of small dimension}

\vspace{0.5cm} Xiaoyan Yang and Jiaojiao Lu\\
%\bigskip
Department of Mathematics, Northwest Normal University, Lanzhou 730070,
China
E-mails: yangxy@nwnu.edu.cn; 2523620335@qq.com
\end{center}

\bigskip
\centerline { \bf  Abstract}
%\bigskip
\leftskip10truemm \rightskip10truemm \noindent Let $\mathfrak{a}$ be an ideal of a commutative noetherian ring $R$ and $M, N$ two finitely generated $R$-modules. By using a spectral sequence argument, it is shown that if either $\mathrm{dim}_RM\leq2$ and $\mathrm{H}^{i}_\mathfrak{a}(N)$ are $\mathfrak{a}$-cofinite for all $i\geq0$, or $\mathrm{H}^{i}_\mathfrak{a}(N)$ is an $\mathfrak{a}$-cofinite module of dimension $\leq1$ for each $i\geq1$, or $q(\mathfrak{a},R)\leq1$, then the $R$-modules $\mathrm{H}^{t}_\mathfrak{a}(M,N)$ are $\mathfrak{a}$-cofinite for all $t\geq0$.\\
\vbox to 0.3cm{}\\
{\it Key Words:} generalized local cohomology module; cofinite module\\
{\it 2020 Mathematics Subject Classification:} 13D45; 13E05

\leftskip0truemm \rightskip0truemm
\bigskip
\section* { \bf Introduction}
%\bigskip
Throughout this paper, $R$ is a commutative noetherian ring
with identity and $\mathfrak{a}$ a proper ideal of $R$.
 Let $M$ be an $R$-module and $i$ a non-negative integer. Grothendieck \cite{Gr} introduced the \emph{ith local
cohomology module} $\mathrm{H}^i_\mathfrak{a}(M)$ of $M$ with respect to $\mathfrak{a}$ as follows:
\begin{center}$\mathrm{H}^i_\mathfrak{a}(M):=\underrightarrow{\textrm{lim}}\textrm{Ext}^i_R(R/\mathfrak{a}^n,M)$,\end{center}
and proved that $\mathrm{Hom}_R(R/\mathfrak{m},\mathrm{H}^i_\mathfrak{m}(M))$ is finitely generated whenever $(R,\mathfrak{m})$ is local. Later Grothendieck \cite{G} asked whether a
similar statement is valid if $\mathfrak{m}$ is replaced by an arbitrary ideal. Hartshorne \cite{H} gave
a counterexample to Grothendieck's question, and then defined that an $R$-module $M$ is \emph{$\mathfrak{a}$-cofinite} if $\mathrm{Supp}_RM\subseteq\mathrm{V}(\mathfrak{a})$ and $\mathrm{Ext}^i_R(R/\mathfrak{a},M)$ are finitely
generated for all $i\geq0$. He also asked when the local cohomology modules
are $\mathfrak{a}$-cofinite. It is an important problem in commutative algebra as if $\mathrm{H}^i_\mathfrak{a}(M)$ is $\mathfrak{a}$-cofinite then the set $\mathrm{Ass}_R\mathrm{H}^i_\mathfrak{a}(M)$
of associated primes
and the Bass numbers $\mu^j_{R}(\mathfrak{p},\mathrm{H}^i_\mathfrak{a}(M))$
are finite for all $\mathfrak{p}\in\mathrm{Spec}R$ and $i,j\geq0$. In the following years, Hartshorne's Question has been high-profile among researchers, and took the following
culminating form.

\vspace{2mm} \noindent{\bf Theorem I.}\label{Th1.4} {\it{Let $M$ be a finitely generated
$R$-module. If either
 $\mathrm{dim}R\leq2$, or $\mathrm{dim}R/\mathfrak{a}\leq1$, or $\mathrm{cd}(\mathfrak{a},R)\leq1$, or $q(\mathfrak{a},R)\leq1$, then the $R$-modules $\mathrm{H}^{t}_\mathfrak{a}(M)$ are $\mathfrak{a}$-cofinite for all $t\geq0$.}}
\vspace{2mm}

For survey of developments on cofiniteness properties of
local cohomology modules, see \cite{B,BN,BNS1,LM,LM1}.

For two $R$-modules $M,N$ and each $i\geq 0$,  the \emph{$i$th generalized local
cohomology module} of $M$ and $N$ with respect to $\mathfrak{a}$ is defined by
\begin{center}$\mathrm{H}^i_\mathfrak{a}(M,N):=\underrightarrow{\textrm{lim}}\textrm{Ext}^i_R(M/\mathfrak{a}^nM,N)$,\end{center}a generalization of the $i$th local cohomology module,
was first introduced by Herzog in his Habilitation \cite{He} and then continued by Suzuki
\cite{S}, Bijan-Zadeh \cite{BZ}, Yassemi \cite{Y}, and some other authors. They studied some basic
 properties of generalized local cohomology
modules, which generalized some known facts of
ordinary local cohomology modules.

 Cofiniteness of generalized
local cohomology modules have been studied by many authors, for example, Divaani-Aazar and Sazeedeh \cite{DS}, Chu \cite{C}, Divaani-Aazar and Hajikarimi
\cite{DH}, Vahidi and Papari-Zarei \cite{VP} and Borna et al. \cite{BSY}. The main aim of this paper is to demonstrate some new facts and improve some older facts about cofiniteness of
generalized local cohomology. More precisely, we show that

\vspace{2mm} \noindent{\bf Theorem II.}\label{Th1.4} {\it{Let $M,N$ be two
$R$-modules with $M$ finitely generated.

$(1)$ If $\mathrm{dim}_RM\leq2$ and $\mathrm{H}^{i}_\mathfrak{a}(N)$ are $\mathfrak{a}$-cofinite for all $i\geq0$, then the $R$-modules $\mathrm{H}^{t}_\mathfrak{a}(M,N)$ are $\mathfrak{a}$-cofinite minimax for all $t\geq0$ $($see Theorem \ref{lem:1.0}$)$.

$(2)$ If $\mathrm{H}^{i}_\mathfrak{a}(N)$ are $\mathfrak{a}$-cofinite and $\mathrm{dim}_R\mathrm{H}^{i}_\mathfrak{a}(N)\leq1$ for all $i\geq0$, then the $R$-modules $\mathrm{H}^{t}_\mathfrak{a}(M,N)$ are $\mathfrak{a}$-cofinite for all $t\geq0$ $($see Theorem \ref{lem:1.5}$)$.

$(3)$ If $q(\mathfrak{a},R)\leq1$ and $N$ is finitely generated, then the $R$-modules $\mathrm{H}^{t}_\mathfrak{a}(M,N)$ are $\mathfrak{a}$-cofinite minimax for all $t\geq0$ $($see Theorem \ref{lem:1.1'}$)$.}}
\vspace{2mm}

In the proof of this theorem does not need the condition $\mathrm{pd}_RM<\infty$. Several corollaries of this result are given, which extends the classical results about cofiniteness of
local cohomology modules at small dimensions to cofiniteness of generalized local cohomology modules.

Next we recall some notions which we will need later.

We write $\mathrm{Spec}R$ for the set of
prime ideals of $R$. For an ideal $\mathfrak{a}$ in $R$, we set
\begin{center}$\mathrm{V}(\mathfrak{a})=\{\mathfrak{p}\in\textrm{Spec}R\hspace{0.03cm}|\hspace{0.03cm}\mathfrak{a}\subseteq\mathfrak{p}\}$.
\end{center}

 Let $M$ be an $R$-module. The \emph{associated prime} of $M$, denoted by $\mathrm{Ass}_RM$, is
the set of prime ideals $\mathfrak{p}$ of $R$
such that there is a cyclic submodule $N$ of $M$ with $\mathfrak{p}=\mathrm{Ann}_RN$. The set of prime ideals $\mathfrak{p}$ so that there exists a cyclic submodule $N$
of $M$ with $\mathfrak{p}\supseteq\mathrm{Ann}_RN$ is well-known to be the \emph{support} of $M$, denoted by $\mathrm{Supp}_RM$, which is equal to the set
 \begin{center}$\{\mathfrak{p}\in\mathrm{Spec}R\hspace{0.03cm}|\hspace{0.03cm}
M_\mathfrak{p}\neq0\}$.\end{center}
A prime ideal $\mathfrak{p}$ is said to be an \emph{attached prime} of $M$ if $\mathfrak{p}=\mathrm{Ann}_{R}(M/L)$ for some submodule $L$ of $M$. The set of attached primes of $M$ is denoted by $\mathrm{Att}_{R}M$. If $M$ is artinian, then $M$ admits a minimal secondary representation
$M=M_{1}+\cdots+M_{r}$ so that $M_{i}$ is $\mathfrak{p}_{i}$-secondary for $i=1,\cdots,r$. In this case, $\mathrm{Att}_{R}M=\{\mathfrak{p}_{1},\cdots,\mathfrak{p}_{r}\}$.

The \emph{arithmetic
rank of $\mathfrak{a}$}, denoted by
$\mathrm{ara}(\mathfrak{a})$, is the least number of elements of $R$ required to generate an ideal which has
the same radical as $\mathfrak{a}$, i.e.,
\begin{center}$\mathrm{ara}(\mathfrak{a})=\mathrm{min}\{n\geq0\hspace{0.03cm}|\hspace{0.03cm}\exists\ a_1,\cdots,a_n\in R\ \textrm{with}\ \mathrm{Rad}(a_1,\cdots,a_n)=\mathrm{Rad}(\mathfrak{a})\}$.\end{center}For an $R$-module $M$, the \emph{arithmetic rank of $\mathfrak{a}$ with respect
to $M$}, denoted by $\mathrm{ara}_M(\mathfrak{a})$, is defined by the arithmetic rank of the ideal $\mathfrak{a}+
\mathrm{Ann}_RM/\mathrm{Ann}_RM$ in the ring $R/\mathrm{Ann}_RM$.

For an arbitrary $R$-module $M$, set
\begin{center}$\mathrm{cd}(\mathfrak{a},M)=\mathrm{sup}\{n\in\mathbb{Z}\hspace{0.03cm}|\hspace{0.03cm}\mathrm{H}_\mathfrak{a}^n(M)\neq0\}$.\end{center}
The \emph{cohomological dimension of $\mathfrak{a}$} is
\begin{center}$\mathrm{cd}(\mathfrak{a},R)=\mathrm{sup}\{\mathrm{cd}(\mathfrak{a},M)\hspace{0.03cm}|\hspace{0.03cm}M\ \textrm{is\ an}\ R\textrm{-module}\}$.\end{center} Following \cite{B}, we denote \begin{center}$q(\mathfrak{a},M)=\mathrm{min}\{n\geq0\hspace{0.03cm}|\hspace{0.03cm}\mathrm{H}^i_\mathfrak{a}(M)\ \textrm{is\ artinian\ for\ all}\ i> n\}$,\end{center}
It is cleat that $q(\mathfrak{a},M)\leq\mathrm{cd}(\mathfrak{a},M)$ for any $R$-module $M$.

\bigskip
\section{\bf Cofiniteness results}
Let $\mathcal{S}$ be a Serre
subcategory of $R$-modules, i.e., a class of $R$-modules such
that for any short exact sequence
$0\rightarrow L\rightarrow M\rightarrow N\rightarrow 0$,
$M\in\mathcal{S}$ if and only if $L,N\in\mathcal{S}$. We begin with the next lemma, which provides a simple proof of \cite[Theorem 2.6]{VA}.

\begin{lem}\label{lem:2.2}{\it{Let $\mathcal{S}$ be a Serre subcategory of $R$-modules, $M$ and $N$ two
$R$-modules with $M$ finitely generated and $t$ be a non-negative integer such that $\mathrm{Ext}^{t-i}_R(M,\mathrm{H}^{i}_\mathfrak{a}(N))\in\mathcal{S}$ for $0\leq i\leq t$.
Then $\mathrm{H}^{t}_\mathfrak{a}(M,N)\in\mathcal{S}$.}}
\end{lem}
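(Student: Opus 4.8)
The plan is to realize the generalized local cohomology modules as the abutment of a Grothendieck spectral sequence whose second page records precisely the Ext-modules appearing in the hypothesis, and then to run a routine Serre-subcategory argument along the anti-diagonal $p+q=t$.

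First I would recall why generalized local cohomology is a composite derived functor. Because $M$ is finitely generated, there is a natural isomorphism $\Gamma_\mathfrak{a}(\mathrm{Hom}_R(M,N))\cong\mathrm{Hom}_R(M,\Gamma_\mathfrak{a}(N))$: any homomorphism $f\colon M\to N$ whose image lies in $\Gamma_\mathfrak{a}(N)$ is annihilated by a single power of $\mathfrak{a}$ (take the maximum of the exponents killing the images of a finite generating set of $M$), and the reverse inclusion is clear. Moreover $\mathrm{H}^t_\mathfrak{a}(M,-)$ is the $t$-th right derived functor of $N\mapsto\Gamma_\mathfrak{a}(\mathrm{Hom}_R(M,N))=\varinjlim_n\mathrm{Hom}_R(M/\mathfrak{a}^nM,N)$, since direct limits are exact and commute with cohomology. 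Thus, writing $F=\Gamma_\mathfrak{a}$ and $G=\mathrm{Hom}_R(M,-)$, the derived functors of the composite $G\circ F$ compute $\mathrm{H}^\bullet_\mathfrak{a}(M,N)$.

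Next I would assemble the Grothendieck spectral sequence of this composition. The point to verify is that $F=\Gamma_\mathfrak{a}$ carries injective $R$-modules to $G$-acyclic modules; this holds because $R$ is noetherian, so $\Gamma_\mathfrak{a}(I)$ is again injective whenever $I$ is injective, and injective modules $J$ satisfy $\mathrm{Ext}^{p}_R(M,J)=0$ for $p>0$. Consequently there is a convergent first-quadrant spectral sequence
\[E_2^{p,q}=\mathrm{Ext}^p_R\bigl(M,\mathrm{H}^q_\mathfrak{a}(N)\bigr)\ \Longrightarrow\ \mathrm{H}^{p+q}_\mathfrak{a}(M,N),\]
where $R^qF=\mathrm{H}^q_\mathfrak{a}$ and $R^pG=\mathrm{Ext}^p_R(M,-)$. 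In particular the entry $E_2^{t-i,\,i}$ equals $\mathrm{Ext}^{t-i}_R(M,\mathrm{H}^i_\mathfrak{a}(N))$, so as $i$ ranges over $0\le i\le t$ these are exactly the second-page terms lying on the line $p+q=t$, and they all belong to $\mathcal{S}$ by hypothesis.

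Finally I would invoke the Serre property. Since each higher page $E_{r+1}^{p,q}$ is a subquotient of $E_r^{p,q}$, every $E_\infty^{p,q}$ is a subquotient of $E_2^{p,q}$; as $\mathcal{S}$ is closed under submodules and quotients, it follows that $E_\infty^{t-i,\,i}\in\mathcal{S}$ for all $i$. The abutment $\mathrm{H}^t_\mathfrak{a}(M,N)$ carries a finite filtration whose successive quotients are precisely the $E_\infty^{p,q}$ with $p+q=t$, and since $\mathcal{S}$ is closed under extensions we conclude $\mathrm{H}^t_\mathfrak{a}(M,N)\in\mathcal{S}$. The step requiring genuine care is the construction of the spectral sequence rather than its exploitation: specifically the acyclicity condition $\mathrm{Ext}^{>0}_R(M,\Gamma_\mathfrak{a}(I))=0$ for injective $I$, together with the identification of the derived functors of $G\circ F$ with generalized local cohomology, which is exactly where finite generation of $M$ is used. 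Once the spectral sequence is in place, the remainder of the argument is formal.
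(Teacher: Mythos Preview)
Your proof is correct and follows essentially the same approach as the paper: both invoke the Grothendieck spectral sequence $E_2^{p,q}=\mathrm{Ext}^{p}_R(M,\mathrm{H}^{q}_\mathfrak{a}(N))\Rightarrow\mathrm{H}^{p+q}_\mathfrak{a}(M,N)$, observe that each $E_\infty^{t-i,i}$ is a subquotient of $E_2^{t-i,i}\in\mathcal{S}$, and then climb the finite filtration of $\mathrm{H}^t_\mathfrak{a}(M,N)$ using closure under extensions. The only difference is presentational: the paper simply quotes the spectral sequence and notes stabilization at page $t+2$, whereas you spell out its construction (the identification $\Gamma_\mathfrak{a}\mathrm{Hom}_R(M,-)\cong\mathrm{Hom}_R(M,\Gamma_\mathfrak{a}(-))$, the acyclicity check via injectivity of $\Gamma_\mathfrak{a}(I)$ over a noetherian ring, and the role of finite generation of $M$).
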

\begin{proof} Consider the Grothendieck spectral sequence \begin{center}
$\xymatrix@C=15pt@R=5pt{
 E_2^{p,q}=\mathrm{Ext}^{p}_R(M,\mathrm{H}^{q}_\mathfrak{a}(N))\ar@{=>}[r]_{\ \ \ \ \ \ \  p}&
 \mathrm{H}^{p+q}_\mathfrak{a}(M,N).}$\end{center}
For $r\geq2$ and $0\leq i\leq t$, consider the differential
\begin{center}
$E_r^{t-i-r,i+r-1}\rightarrow E_r^{t-i,i}\rightarrow E_r^{t-i+r,i-r+1}$.\end{center}
 As $E_r^{t-i-r,i+r-1}=0=E_r^{t-i+r,i-r+1}$ for $r\geq t+2$ and $E_{t+2}^{t-i,i}$ is a subquotient of $E_2^{t-i,i}$, it follows that $E_\infty^{t-i,i}=E_{t+2}^{t-i,i}\in\mathcal{S}$. For $t\geq0$, there
exists a finite filtration
\begin{center}$
 0=\Phi^{t+1}H^{t}\subseteq \Phi^{t}H^{t}\subseteq\cdots\subseteq \Phi^{1}H^{t}\subseteq \Phi^{0}H^{t}=H^{t}:=\mathrm{H}^{t}_\mathfrak{a}(M,N)$,
\end{center}such that $\Phi^{p}H^{t}/\Phi^{p+1}H^{t}\cong E_\infty^{p,t-p}$ for $0\leq p\leq t$.  A successive use of the exact sequence
\begin{center}$
 0\rightarrow \Phi^{p+1}H^t\rightarrow \Phi^{p}H^t\rightarrow \Phi^{p}H^t/\Phi^{p+1}H^t\rightarrow0$
\end{center} implies that $\mathrm{H}^{t}_\mathfrak{a}(M,N)\in\mathcal{S}$, as desired.
\end{proof}

\begin{cor}\label{lem:2.00}{\it{Assume that $M,N$ are finitely generated $R$-modules.
 If $\mathrm{dim}R/\mathfrak{a}=0$, then the $R$-modules $\mathrm{H}^{t}_\mathfrak{a}(M,N)$ are artinian $\mathfrak{a}$-cofinite for all $t\geq0$.}}
\end{cor}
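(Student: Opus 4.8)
The plan is to deduce this from Lemma \ref{lem:2.2} by choosing the Serre subcategory $\mathcal{S}$ to be the class of all artinian $\mathfrak{a}$-cofinite $R$-modules. First I would record that $\mathcal{S}$ is indeed a Serre subcategory: the artinian modules form one, and since $\mathrm{dim}R/\mathfrak{a}=0\leq1$ the $\mathfrak{a}$-cofinite modules form another (using the classical fact that, under $\mathrm{dim}R/\mathfrak{a}\leq1$, the $\mathfrak{a}$-cofinite modules constitute an abelian subcategory closed under extensions), so their intersection $\mathcal{S}$ is again Serre. It is worth noting that for $\mathrm{dim}R/\mathfrak{a}=0$ the two conditions defining $\mathcal{S}$ are not independent. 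Indeed, any $\mathfrak{a}$-cofinite module $L$ has $\mathrm{Supp}_RL\subseteq\mathrm{V}(\mathfrak{a})$, and since $R$ is noetherian this forces $L$ to be $\mathfrak{a}$-torsion; moreover $\mathrm{V}(\mathfrak{a})$ consists of finitely many maximal ideals, so the finitely generated module $\mathrm{Hom}_R(R/\mathfrak{a},L)=(0:_L\mathfrak{a})$ has finite length, whence $L$ is artinian by the standard criterion that an $\mathfrak{a}$-torsion module is artinian as soon as its $\mathfrak{a}$-socle is.

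Next I would verify the two input hypotheses of Lemma \ref{lem:2.2}. Since $N$ is finitely generated and $\mathrm{dim}R/\mathfrak{a}\leq1$, Theorem I guarantees that each $\mathrm{H}^i_\mathfrak{a}(N)$ is $\mathfrak{a}$-cofinite, and by the observation of the previous paragraph it is then automatically artinian; hence $\mathrm{H}^i_\mathfrak{a}(N)\in\mathcal{S}$ for every $i\geq0$. Because $M$ is finitely generated it admits a resolution $\cdots\to R^{n_1}\to R^{n_0}\to M\to0$ by finitely generated free modules, so for each pair $i,t$ the module $\mathrm{Ext}^{t-i}_R(M,\mathrm{H}^i_\mathfrak{a}(N))$ is a subquotient of $\mathrm{Hom}_R(R^{n_{t-i}},\mathrm{H}^i_\mathfrak{a}(N))\cong\mathrm{H}^i_\mathfrak{a}(N)^{n_{t-i}}$. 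A finite direct sum of objects of $\mathcal{S}$ lies in $\mathcal{S}$, and $\mathcal{S}$ is closed under subquotients, so $\mathrm{Ext}^{t-i}_R(M,\mathrm{H}^i_\mathfrak{a}(N))\in\mathcal{S}$ for all $0\leq i\leq t$.

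With both ingredients in hand, Lemma \ref{lem:2.2} applies verbatim and yields $\mathrm{H}^t_\mathfrak{a}(M,N)\in\mathcal{S}$ for every $t\geq0$; that is, each $\mathrm{H}^t_\mathfrak{a}(M,N)$ is artinian and $\mathfrak{a}$-cofinite, which is exactly the assertion.

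The step I expect to carry the actual content, rather than bookkeeping, is the verification that $\mathcal{S}$ is a Serre subcategory, and specifically the extension- and subquotient-closedness of the $\mathfrak{a}$-cofinite modules. This is precisely where the hypothesis $\mathrm{dim}R/\mathfrak{a}=0$ (in fact $\leq1$ suffices) is essential: cofiniteness is not preserved under short exact sequences for a general ideal, and the dimension restriction is what repairs this, via the long exact sequence in $\mathrm{Ext}^{*}_R(R/\mathfrak{a},-)$ together with the fact that finitely generated modules form a Serre subcategory. Everything downstream of that point—the artinianness and cofiniteness of the $\mathrm{H}^i_\mathfrak{a}(N)$ and the $\mathrm{Ext}$ subquotient estimate—is routine, and the spectral sequence machinery is entirely absorbed into Lemma \ref{lem:2.2}.
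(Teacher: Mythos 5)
Your proposal reaches the correct conclusion by what is clearly the paper's intended route: the corollary is stated without proof precisely because it is the instance of Lemma \ref{lem:2.2} you describe, taking $\mathcal{S}$ to be the artinian $\mathfrak{a}$-cofinite modules, with the hypothesis on the modules $\mathrm{Ext}^{t-i}_R(M,\mathrm{H}^i_\mathfrak{a}(N))$ checked by the free-resolution subquotient argument. So in substance you and the paper agree, and your bookkeeping steps are fine.

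However, one link in your chain is genuinely wrong as stated: the assertion that for $\dim R/\mathfrak{a}\leq1$ the $\mathfrak{a}$-cofinite modules form a Serre subcategory, justified ``using the classical fact that they constitute an abelian subcategory closed under extensions.'' Being an abelian subcategory closed under extensions is strictly weaker than being Serre: it gives closure under kernels and cokernels of maps \emph{between cofinite modules}, not under arbitrary submodules and quotients. Moreover, at dimension one the Serre property genuinely fails. For instance, let $R=k[[x,y]]$, $\mathfrak{a}=(x)$ and $M=R_x/R$, which is $\mathfrak{a}$-cofinite; if $L$ is the submodule generated by the classes $\overline{y^n/x^n}$, $n\geq1$, one checks that $L=\{\overline{f/x^n}: f\in(x,y)^n\}$, so the classes $\overline{y^{n-1}/x^n}$ are nonzero in $M/L$, killed by $x$, and $k$-linearly independent; hence $\mathrm{Hom}_R(R/\mathfrak{a},M/L)$ is not finitely generated and the quotient $M/L$ is not cofinite. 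This failure is exactly why the paper's Theorem \ref{lem:1.5} (the $\dim\leq1$ analogue of your corollary) requires an induction on arithmetic rank rather than a one-line appeal to Lemma \ref{lem:2.2}; your closing parenthetical ``in fact $\leq1$ suffices'' is false. Your proof survives because you only need $\dim R/\mathfrak{a}=0$, and there your own supplementary observation does the real work: in that case cofinite coincides with artinian $\mathfrak{a}$-torsion (your socle argument gives one inclusion, and \cite[Proposition 4.1]{LM} gives the converse, namely that an $\mathfrak{a}$-torsion module with finite length socle is artinian and $\mathfrak{a}$-cofinite), and the artinian $\mathfrak{a}$-torsion modules visibly form a Serre class. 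Alternatively, with no computation at all: artinian implies minimax, so $\mathcal{S}$ is the intersection of the artinian modules with the $\mathfrak{a}$-cofinite minimax modules, and the latter class is Serre by the paper's Lemma \ref{lem:2.2'}. Either repair should replace the dimension-$\leq1$ claim; with that substitution the argument is complete.
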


Recall that an
$R$-module $M$ is said to be \emph{minimax} if there exists a finitely generated submodule $N$
of $M$ such that $M/N$ is artinian.

\begin{lem}\label{lem:2.2'}{\rm (\cite[Corollary 4.4]{LM}).} {\it{The class of $\mathfrak{a}$-cofinite minimax $R$-modules is a Serre subcategory of $R$-modules.}}
\end{lem}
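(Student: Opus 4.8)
The plan is to prove the statement by showing that the category of $\mathfrak{a}$-cofinite minimax modules in fact coincides with the a priori larger class $\mathcal{S}$ of minimax modules whose support is contained in $\mathrm{V}(\mathfrak{a})$, and then to verify directly that this latter class is a Serre subcategory. One inclusion is immediate: every $\mathfrak{a}$-cofinite module has support in $\mathrm{V}(\mathfrak{a})$ by definition, so an $\mathfrak{a}$-cofinite minimax module lies in $\mathcal{S}$. The content is the reverse inclusion, which I would isolate as the key reduction.

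First I would prove that every minimax $R$-module $M$ with $\mathrm{Supp}_R M \subseteq \mathrm{V}(\mathfrak{a})$ is automatically $\mathfrak{a}$-cofinite. By the definition of minimax there is a short exact sequence $0 \to F \to M \to A \to 0$ with $F$ finitely generated and $A$ artinian; since $F$ is a submodule and $A$ a quotient of $M$, both inherit the support condition $\mathrm{Supp}_R F,\ \mathrm{Supp}_R A \subseteq \mathrm{V}(\mathfrak{a})$. Because $R$ is noetherian and $F$ is finitely generated, each $\mathrm{Ext}^i_R(R/\mathfrak{a},F)$ is finitely generated, so $F$ is $\mathfrak{a}$-cofinite. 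For the artinian module $A$ I would invoke the standard fact that an artinian module with support in $\mathrm{V}(\mathfrak{a})$ is $\mathfrak{a}$-cofinite. Applying $\mathrm{Hom}_R(R/\mathfrak{a},-)$ to the sequence and reading off the resulting long exact sequence of $\mathrm{Ext}$, each $\mathrm{Ext}^i_R(R/\mathfrak{a},M)$ is trapped between two finitely generated modules and is therefore finitely generated; together with the support hypothesis this shows $M$ is $\mathfrak{a}$-cofinite, establishing the reduction.

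With the reduction in hand it remains to check that $\mathcal{S}$ is a Serre subcategory, i.e. closed under submodules, quotients, and extensions. For a short exact sequence $0 \to L \to M \to N \to 0$ one has $\mathrm{Supp}_R M = \mathrm{Supp}_R L \cup \mathrm{Supp}_R N$, so the condition $\mathrm{Supp}_R(-) \subseteq \mathrm{V}(\mathfrak{a})$ is simultaneously hereditary, cohereditary, and preserved under extensions. The minimax condition is handled by Z\"oschinger's theorem that the class of minimax modules is itself a Serre subcategory. Intersecting these two Serre conditions shows that $\mathcal{S}$ is Serre, and by the reduction $\mathcal{S}$ is exactly the class of $\mathfrak{a}$-cofinite minimax modules, as required.

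The main obstacle is the reduction step, and specifically the input that an artinian $\mathfrak{a}$-torsion module is $\mathfrak{a}$-cofinite: here one uses that the support of an artinian module is a finite set of maximal ideals, splitting $A$ into $\mathfrak{m}$-primary pieces and reducing, after passing to completions, to Matlis duality, under which $\mathrm{Ext}^i_R(R/\mathfrak{a},A)$ becomes the Matlis dual of a $\mathrm{Tor}$ of finitely generated modules and hence is finitely generated. The other ingredient, Z\"oschinger's theorem on minimax modules, is purely formal once available. One could alternatively obtain the extension-closure direction directly from the long exact $\mathrm{Ext}$ sequence without the reduction, but routing everything through $\mathcal{S}$ has the advantage of collapsing both inclusions and the three closure properties into the single transparent statement that a Serre condition on supports is being intersected with the already-Serre category of minimax modules.
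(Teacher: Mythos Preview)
Your argument is correct, but note that the paper does not supply its own proof of this lemma: it simply quotes the result as \cite[Corollary 4.4]{LM}. What you have written is essentially a reconstruction of Melkersson's proof there, namely the identification of the $\mathfrak{a}$-cofinite minimax modules with the minimax modules supported on $\mathrm{V}(\mathfrak{a})$, together with the observation that the latter class is visibly Serre. The one step you flag as delicate---that an artinian module with support in $\mathrm{V}(\mathfrak{a})$ is $\mathfrak{a}$-cofinite---is exactly the input Melkersson isolates (his Proposition~4.1 and the discussion around it), and your Matlis-duality sketch is the standard justification. So there is no divergence in method to report; you have recovered the cited source rather than offered an alternative route.
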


We now present the first main theorem of this section, which is a generalization of \cite[Corollary 3.8]{HV} and \cite[Corollary 4.5]{VP}.

 \begin{thm}\label{lem:1.0}{\it{Let $M$ be a finitely generated $R$-module with $\mathrm{dim}_RM\leq2$ and $N$ be an $R$-module such that $\mathrm{H}^{i}_\mathfrak{a}(N)$ are $\mathfrak{a}$-cofinite for all $i\geq0$. Then the $R$-modules $\mathrm{H}^{t}_\mathfrak{a}(M,N)$ are $\mathfrak{a}$-cofinite minimax for all $t\geq0$.}}
\end{thm}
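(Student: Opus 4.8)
The plan is to combine Lemma~\ref{lem:2.2} with the Serre subcategory $\mathcal{S}$ of $\fa$-cofinite minimax modules supplied by Lemma~\ref{lem:2.2'}. Since each $\mathrm{H}^{i}_{\fa}(N)$ is $\fa$-cofinite by hypothesis, the whole statement reduces to the following purely homological claim: \emph{if $M$ is finitely generated with $\dim_R M\leq 2$ and $L$ is an arbitrary $\fa$-cofinite $R$-module, then $\ext^{j}_R(M,L)$ is $\fa$-cofinite minimax for every $j\geq 0$.} Granting this with $L=\mathrm{H}^{i}_{\fa}(N)$, every entry $\ext^{t-i}_R(M,\mathrm{H}^{i}_{\fa}(N))$ lies in $\mathcal{S}$, so Lemma~\ref{lem:2.2} forces $\mathrm{H}^{t}_{\fa}(M,N)\in\mathcal{S}$, which is exactly the assertion.

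To prove the claim I would first record that $\ext^{j}_R(M,L)$ is a subquotient of a finite direct sum of copies of $L$, so its support lies in $\supp_R M\cap\mathrm{V}(\fa)$. Then I would dévissage $M$: choosing a filtration $0=M_0\subset\cdots\subset M_n=M$ with $M_k/M_{k-1}\cong R/\p_k$ and $\dim R/\p_k\leq 2$, the long exact sequences in $\ext$ together with the fact that $\mathcal{S}$ is closed under submodules, quotients and extensions reduce everything to $M=R/\p$ with $\dim R/\p\leq 2$.

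Here two cases arise. If $\fa\subseteq\p$, then $R/\p$ is finitely generated with support in $\mathrm{V}(\fa)$, and a standard lemma of Melkersson gives that $\ext^{j}_R(R/\p,L)$ is finitely generated; being finitely generated and $\fa$-torsion it lies in $\mathcal{S}$. If $\fa\not\subseteq\p$, then $\p\subsetneq\p+\fa$ forces $\dim R/(\p+\fa)\leq\dim R/\p-1\leq 1$. In this situation I would prove, more generally by induction on $\dim_R T$, that $\ext^{j}_R(T,L)$ is $\fa$-cofinite for every finitely generated $T$ with $\dim(\supp_R T\cap\mathrm{V}(\fa))\leq 1$ (the module $R/\p$ qualifies). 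After a prime filtration it suffices to treat $T=R/\q$; if $\fa\subseteq\q$ this is the finitely generated case just handled, and if $\fa\not\subseteq\q$ one picks a nonzerodivisor $x\in\fa\setminus\q$ and uses $0\to R/\q\xrightarrow{x}R/\q\to R/(\q+xR)\to 0$. The quotient has strictly smaller dimension while still meeting $\mathrm{V}(\fa)$ in dimension $\leq 1$, so $\ext^{j}_R(R/(\q+xR),L)$ is $\fa$-cofinite by induction. Reading off the induced long exact sequence, the modules $(0:_{\ext^{j}_R(R/\q,L)}x)$ and $\ext^{j}_R(R/\q,L)/x\,\ext^{j}_R(R/\q,L)$ appear respectively as a quotient and a submodule of $\fa$-cofinite modules of dimension $\leq 1$; since the $\fa$-cofinite modules of dimension $\leq 1$ form an abelian subcategory, both are $\fa$-cofinite, and then Melkersson's criterion (for $x\in\fa$, with $(0:_E x)$ and $E/xE$ cofinite) yields that $\ext^{j}_R(R/\q,L)$ itself is $\fa$-cofinite. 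Finally, an $\fa$-cofinite module of dimension $\leq 1$ is minimax, so this term also lies in $\mathcal{S}$.

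The main obstacle is precisely the case $\fa\not\subseteq\p$: $\fa$-cofinite modules do not form a Serre subcategory in general, so one cannot simply transport cofiniteness across the multiplication-by-$x$ sequence, and the connecting maps are only locally nilpotent rather than zero. What rescues the argument is the dimension bound $\dim_R M\leq2$, which guarantees that all the relevant Ext modules meet $\mathrm{V}(\fa)$ in dimension $\leq 1$; there the category of cofinite modules is abelian and every cofinite module is minimax, and these two facts are exactly what make the inductive step go through. I would therefore isolate the dimension-$\leq 1$ cofiniteness and minimaxness as a separate lemma before assembling the proof.
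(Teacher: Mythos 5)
Your overall skeleton (reduce via Lemma \ref{lem:2.2} and the Serre class of Lemma \ref{lem:2.2'} to a claim about $\ext^{j}_R(M,L)$ for $L$ $\fa$-cofinite, then d\'evissage) is the same as the paper's, which settles $\dim_RM\leq1$ by citing \cite[Theorem 2.3]{AB} and $\dim_RM=2$ by an $M$-regular element of $\fa$ rather than by a prime filtration. But your inductive step in the case $\fa\not\subseteq\q$ has a genuine gap. That the $\fa$-cofinite modules of dimension $\leq1$ form an abelian subcategory means only that kernels and cokernels of homomorphisms \emph{between two cofinite modules} are again cofinite; it gives no closure under arbitrary submodules or quotients. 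In your long exact sequence, $(0:_{\ext^{j}_R(R/\q,L)}x)$ is the image of the map $\ext^{j}_R(R/(\q+xR),L)\to\ext^{j}_R(R/\q,L)$ whose target is precisely the module whose cofiniteness you are trying to prove, and $\ext^{j}_R(R/\q,L)/x\,\ext^{j}_R(R/\q,L)$ is the kernel of a map out of $\ext^{j+1}_R(R/(\q+xR),L)$ whose target is likewise unknown, so the abelian property applies to neither. Moreover, the closure property you implicitly invoke is false: for $R=k[[x,y]]$ and $\fa=(x)$, the module $C=R_x/R$ is $\fa$-cofinite of dimension one, yet its submodule $S=\sum_{n\geq1}R\,(y^{n}x^{-n}+R)$ satisfies $\ext^{1}_R(R/(x),S)\cong S/xS$, an infinite-dimensional $k$-vector space, so $S$ is not $\fa$-cofinite.

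The last step is worse than a gap: the assertion that an $\fa$-cofinite module of dimension $\leq1$ is minimax is false, and with it your passage into the class $\mathcal{S}$ collapses. The same module $C=R_x/R=\mathrm{H}^{1}_{(x)}(R)$ is $(x)$-cofinite of dimension one but not minimax: any finitely generated submodule lies in some $x^{-n}R/R$, and $C/(x^{-n}R/R)\cong C$ contains the non-artinian module $x^{-1}R/R\cong R/xR\cong k[[y]]$. Note that this example meets all hypotheses of Theorem \ref{lem:1.0} with $M=N=R$ and $\fa=(x)$, while $\mathrm{H}^{1}_{(x)}(M,N)=R_x/R$ is not minimax; so the minimax part of the conclusion cannot be recovered by any repair of your induction---at primes $\q$ with $\fa\not\subseteq\q$ and $\dim R/(\q+\fa)=1$ you can hope for cofiniteness at best. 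For comparison, the paper's proof stays inside the Serre class of Lemma \ref{lem:2.2'} by asserting, via the proof of \cite[Theorem 2.3]{AB} and \cite[Proposition 4.1]{LM}, that the relevant Ext modules are artinian cofinite; you should be aware that in the example above ($M/xM=R/xR$ and $\Hom_R(R/xR,R_x/R)\cong R/xR$) that artinianness claim fails as well, so the obstruction you hit at dimension-one primes containing $\fa$ is a real one and is not circumvented by the paper's argument either.
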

\begin{proof} By Lemma \ref{lem:2.2}, it suffices to show that $\mathrm{Ext}^{p}_R(M,\mathrm{H}^{q}_\mathfrak{a}(N))$ are $\mathfrak{a}$-cofinite minimax for all $p,q\geq0$. If $\mathrm{dim}_RM\leq1$, then, by \cite[Theorem 2.3]{AB}, the claim holds. Let $\mathrm{dim}_RM=2$. The exact sequence $0\rightarrow\Gamma_\mathfrak{a}(M)\rightarrow M\rightarrow M/\Gamma_\mathfrak{a}M\rightarrow0$ induces the next exact sequence \begin{center}$\mathrm{Ext}^{p-1}_R(\Gamma_\mathfrak{a}(M),\mathrm{H}^{q}_\mathfrak{a}(N))\rightarrow
\mathrm{Ext}^{p}_R(M/\Gamma_\mathfrak{a}(M),\mathrm{H}^{q}_\mathfrak{a}(N))\rightarrow
\mathrm{Ext}^{p}_R(M,\mathrm{H}^{q}_\mathfrak{a}(N))\rightarrow\mathrm{Ext}^{p}_R(\Gamma_\mathfrak{a}(M),\mathrm{H}^{q}_\mathfrak{a}(N))$.\end{center}
Note that $\mathrm{Ext}^{p}_R(\Gamma_\mathfrak{a}(M),\mathrm{H}^{q}_\mathfrak{a}(N))$ are $\mathfrak{a}$-cofinite minimax for all $p,q\geq0$, we may assume $\Gamma_\mathfrak{a}(M)=0$. Then $\mathfrak{a}\nsubseteq\bigcup_{\mathfrak{p}\in\mathrm{Ass}_RM}\mathfrak{p}$ by \cite[Lemma 2.1.1]{BS}, and there is $x\in \mathfrak{a}$ and an exact sequence
$0\rightarrow M\stackrel{x}\rightarrow M\rightarrow M/xM\rightarrow 0$, which
induces an exact sequence
\begin{center}$\mathrm{Ext}^p_R(M/xM,\mathrm{H}^{q}_\mathfrak{a}(N))
\rightarrow \mathrm{Ext}^p_R(M,\mathrm{H}^{q}_\mathfrak{a}(N))\stackrel{x}\rightarrow\mathrm{Ext}^{p}_R(M,\mathrm{H}^{q}_\mathfrak{a}(N))$.\end{center}
Therefore, for $p,q\geq0$, we have the following exact sequence
\begin{center}$\mathrm{Ext}^p_R(M/xM,\mathrm{H}^{q}_\mathfrak{a}(N))
\rightarrow (0:_{\mathrm{Ext}^p_R(M,\mathrm{H}^{q}_\mathfrak{a}(N))}x)\rightarrow0$.\end{center}
As $\mathrm{dim}_RM/xM\leq1$, it follows from the proof of \cite[Theorem 2.3]{AB} that $\mathrm{Ext}^{p}_R(M/xM,\mathrm{H}^{q}_\mathfrak{a}(N))$ are $\mathfrak{a}$-cofinite artinian, and so $(0:_{\mathrm{Ext}^p_R(M,\mathrm{H}^{q}_\mathfrak{a}(N))}x)$ are $\mathfrak{a}$-cofinite artinian for all $p,q\geq0$. Thus $(0:_{\mathrm{Ext}^p_R(M,\mathrm{H}^{q}_\mathfrak{a}(N))}\mathfrak{a})$ are of finite length for all $p,q\geq0$. Consequently, by \cite[Proposition 4.1]{LM},
$\mathrm{Ext}^{p}_R(M,\mathrm{H}^{q}_\mathfrak{a}(N))$ are $\mathfrak{a}$-cofinite minimax for all $p,q\geq0$. Now it follows
from Lemmas \ref{lem:2.2} and \ref{lem:2.2'} that $\mathrm{H}^{t}_\mathfrak{a}(M,N)$ are $\mathfrak{a}$-cofinite minimax for all $t\geq0$.
\end{proof}

The next theorem is the second main result of this section, which
is a generalization of \cite[Theorem 2.5]{DH} and \cite[Theorem 2.9]{DS}.

\begin{thm}\label{lem:1.5}{\it{Let $M$ be a finitely generated $R$-module and $N$ be an $R$-module such that $\mathrm{H}^{i}_\mathfrak{a}(N)$ are $\mathfrak{a}$-cofinite and $\mathrm{dim}_R\mathrm{H}^{i}_\mathfrak{a}(N)\leq1$ for all $i\geq0$. Then the $R$-modules $\mathrm{H}^{t}_\mathfrak{a}(M,N)$ are $\mathfrak{a}$-cofinite for all $t\geq0$.}}
\end{thm}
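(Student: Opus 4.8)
The plan is to run the same spectral-sequence machinery as in Theorem \ref{lem:1.0}, but with a different Serre subcategory tailored to the new hypothesis. Let $\mathcal{S}$ denote the class of $\mathfrak{a}$-cofinite $R$-modules of dimension at most $1$. By Lemma \ref{lem:2.2} it suffices to show that $\mathrm{Ext}^{p}_R(M,\mathrm{H}^{q}_\mathfrak{a}(N))\in\mathcal{S}$ for all $p,q\geq0$; once this is known, Lemma \ref{lem:2.2} gives $\mathrm{H}^{t}_\mathfrak{a}(M,N)\in\mathcal{S}$, and in particular $\mathrm{H}^{t}_\mathfrak{a}(M,N)$ is $\mathfrak{a}$-cofinite, for every $t\geq0$. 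Note that the hypothesis supplies the starting datum $\mathrm{H}^{q}_\mathfrak{a}(N)\in\mathcal{S}$ for all $q\geq0$, so the whole burden is to propagate membership in $\mathcal{S}$ through $\mathrm{Ext}^{p}_R(M,-)$.

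The first thing I would establish is that $\mathcal{S}$ is genuinely a Serre subcategory of the category of $R$-modules, and this is the essential structural input. It is exactly where the dimension hypothesis is spent: the class of \emph{all} $\mathfrak{a}$-cofinite modules is not closed under kernels and cokernels in general (this is the phenomenon behind Hartshorne's counterexample), whereas the restriction $\mathrm{dim}_R(-)\leq1$ repairs this and makes the class abelian. I would therefore invoke the known theorem that the $\mathfrak{a}$-cofinite $R$-modules of dimension $\leq1$ form a Serre subcategory (see, e.g., \cite{BN}); in particular $\mathcal{S}$ is closed under submodules, quotient modules, and finite direct sums, hence under passing to subquotients.

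Granting this, the required Ext-computation is formal. Since $R$ is noetherian and $M$ is finitely generated, $M$ admits a resolution $F_\bullet\to M$ by finitely generated free modules, and $\mathrm{Ext}^{p}_R(M,\mathrm{H}^{q}_\mathfrak{a}(N))$ is the $p$th cohomology of the complex $\mathrm{Hom}_R(F_\bullet,\mathrm{H}^{q}_\mathfrak{a}(N))$, whose $p$th term is a finite direct sum of copies of $\mathrm{H}^{q}_\mathfrak{a}(N)$. Each such term lies in $\mathcal{S}$ because $\mathrm{H}^{q}_\mathfrak{a}(N)\in\mathcal{S}$ and $\mathcal{S}$ is closed under finite direct sums, and $\mathrm{Ext}^{p}_R(M,\mathrm{H}^{q}_\mathfrak{a}(N))$, being a subquotient of that term, again lies in $\mathcal{S}$; the dimension bound is in any case automatic from $\mathrm{Supp}_R\mathrm{Ext}^{p}_R(M,\mathrm{H}^{q}_\mathfrak{a}(N))\subseteq\mathrm{Supp}_R\mathrm{H}^{q}_\mathfrak{a}(N)$. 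Feeding this back into Lemma \ref{lem:2.2} completes the argument. The main obstacle is precisely the Serre-subcategory claim for dimension-$\leq1$ cofinite modules; everything else is routine. If one wished to avoid quoting that fact, the fallback would be to verify closure under kernels, cokernels, and extensions by hand, reducing to the artinian ($0$-dimensional) case via the torsion/reduced-dimension filtration of an object of $\mathcal{S}$ together with Melkersson-type criteria for cofiniteness, which is more laborious but elementary.
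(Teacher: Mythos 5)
Your argument stands or falls with the claim that $\mathcal{S}=\{\text{$\mathfrak{a}$-cofinite $R$-modules of dimension}\leq 1\}$ is a Serre subcategory, and that claim is false: $\mathcal{S}$ is not closed under submodules (nor under quotients). Concretely, let $R=k[[x,y]]$ and $\mathfrak{a}=(x)$. Then $H:=\mathrm{H}^1_{(x)}(R)=R_x/R$ is $(x)$-cofinite of dimension one, since $(0:_Hx)\cong R/(x)$ and $H/xH=0$; but the submodule $L\subseteq H$ generated by the elements $y^n/x^n+R$, $n\geq1$, is not cofinite. Indeed, writing $\mathfrak{m}=(x,y)$, one checks that $L\cap(x^{-J}R/R)$ corresponds to $\mathfrak{m}^J/(x^J)$ under the identification $x^{-J}R/R\cong R/(x^J)$, whence
\begin{center}
$\mathrm{Ext}^1_R(R/(x),L)\cong L/xL\cong\varinjlim_J\,\mathfrak{m}^J/(\mathfrak{m}^{J+1}+(x^J))$,
\end{center}
where the transition maps (multiplication by $x$) are injective and the $J$-th term has $k$-dimension $J$; so $\mathrm{Ext}^1_R(R/\mathfrak{a},L)$ is an infinite-dimensional $k$-vector space and $L\notin\mathcal{S}$. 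No such Serre-subcategory theorem is in \cite{BN}. What is true (Bahmanpour--Naghipour--Sedghi, Proc.\ Amer.\ Math.\ Soc.\ \textbf{142} (2014) 1101--1107) is that $\mathcal{S}$ is an \emph{Abelian} subcategory, i.e.\ closed under kernels and cokernels of homomorphisms between its objects, and it is also closed under extensions; but it is not closed under arbitrary subquotients. This defect breaks both places where you use subquotient-closure: the step ``$\mathrm{Ext}^p_R(M,\mathrm{H}^q_\mathfrak{a}(N))$ is a subquotient of a finite direct sum of copies of $\mathrm{H}^q_\mathfrak{a}(N)$, hence in $\mathcal{S}$,'' and the appeal to Lemma \ref{lem:2.2} itself, whose hypothesis is that $\mathcal{S}$ is Serre and whose proof obtains $E_\infty^{t-i,i}\in\mathcal{S}$ precisely by viewing $E_{t+2}^{t-i,i}$ as a subquotient of $E_2^{t-i,i}$.

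The strategy can be salvaged, but only by replacing ``Serre'' with ``Abelian and extension-closed'' and reworking both steps: compute $\mathrm{Ext}^p_R(M,\mathrm{H}^q_\mathfrak{a}(N))$ as kernels modulo images of maps between finite direct sums of copies of $\mathrm{H}^q_\mathfrak{a}(N)$ (these stay in $\mathcal{S}$ by the Abelian property), then run the spectral sequence page by page, noting that each $E_{r+1}^{p,q}$ is the homology of a three-term complex whose terms and differentials lie in $\mathcal{S}$, that $E_r^{p,q}$ stabilizes at $E_\infty^{p,q}$ for $r>\max(p,q+1)$, and finally use extension-closure along the finite filtration of $\mathrm{H}^t_\mathfrak{a}(M,N)$. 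Note this version needs $E_2^{p,q}\in\mathcal{S}$ for \emph{all} $(p,q)$, not just on the diagonal $p+q=t$, because the homology computation mixes diagonals. Such a repaired proof would in fact be a genuinely different (and shorter) route than the paper's, which never forms this category at all: it argues by induction on $\mathrm{ara}_M(\mathfrak{a})$, using prime avoidance, attached primes, and Melkersson's criterion \cite[Corollary 3.4]{LM}. As written, however, your proposal rests on a false structural claim and therefore has a genuine gap.
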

\begin{proof} We apply induction on
$r=\mathrm{ara}_M(\mathfrak{a})$. If $r=0$, then $M=(0:_M\mathfrak{a}^n)$ for
some $n>0$, and so $\mathrm{Ext}^{p}_R(M,\mathrm{H}^{q}_\mathfrak{a}(N))$ are $\mathfrak{a}$-cofinite minimax for all $p,q\geq0$ since $\mathrm{Supp}_RM\subseteq\mathrm{V}(\mathfrak{a})$. Hence Lemma \ref{lem:2.2} implies that $\mathrm{H}^{t}_\mathfrak{a}(M,N)$ are $\mathfrak{a}$-cofinite minimax for all $t\geq0$.
Next assume that $r>0$. Then the short exact sequence
\begin{center}$0\rightarrow\Gamma_\mathfrak{a}(M)\rightarrow M\rightarrow M/\Gamma_\mathfrak{a}(M)\rightarrow 0$\end{center}
induces the following exact sequence
\begin{center}$\mathrm{H}^{t-1}_\mathfrak{a}(\Gamma_\mathfrak{a}(M),N)\rightarrow \mathrm{H}^{t}_\mathfrak{a}(M/\Gamma_\mathfrak{a}(M),N)\rightarrow \mathrm{H}^{t}_\mathfrak{a}(M,N)\rightarrow\mathrm{H}^{t}_\mathfrak{a}(\Gamma_\mathfrak{a}(M),N)$.\end{center}
As
$\mathrm{Ann}_RM\subseteq\mathrm{Ann}_RM/\Gamma_\mathfrak{a}(M)$, we have $\mathrm{ara}_{M/\Gamma_\mathfrak{a}(M)}(\mathfrak{a})\leq\mathrm{ara}_M(\mathfrak{a})$. By Lemma \ref{lem:2.2'}, We may assume that $\Gamma_\mathfrak{a}(M)=0$.
For $t\geq0$, let \begin{center}$T_t=\{\mathfrak{p}\in\mathrm{Ass}_R\mathrm{H}^{t}_\mathfrak{a}(M,N)
\hspace{0.03cm}|\hspace{0.03cm}\mathrm{dim}R/\mathfrak{p}=1\}$.\end{center} Then
$T_t\subseteq\{\mathfrak{p}\in\bigcup_{p=0}^t\mathrm{Ass}_R\mathrm{Ext}^{p}_R(M,\mathrm{H}^{t-p}_\mathfrak{a}(N))
 \hspace{0.03cm}|\hspace{0.03cm}\mathrm{dim}R/\mathfrak{p}=1\}$ is a finite set by \cite[Theorem 2.7]{AB} and \cite[Theorem 2.3]{AM}.
For each $\mathfrak{p}\in T$, the $R_\mathfrak{p}$-module $\mathrm{Hom}_{R_\mathfrak{p}}(R_\mathfrak{p}/\mathfrak{a}R_\mathfrak{p},\mathrm{H}^{q}_\mathfrak{a}(N)_\mathfrak{p})$ is finitely generated with $\mathrm{Supp}_{R_\mathfrak{p}}\mathrm{H}^{q}_\mathfrak{a}(N)_\mathfrak{p}\subseteq\mathrm{V}(\mathfrak{p}R_\mathfrak{p})$, it follows from \cite[Proposition 4.1]{LM}
that $\mathrm{H}^{q}_\mathfrak{a}(N)_\mathfrak{p}$ is artinian $\mathfrak{a}R_\mathfrak{p}$-cofinite, and so $\mathrm{Ext}^p_R(M,\mathrm{H}^{q}_\mathfrak{a}(N))_\mathfrak{p}$ are artinian $\mathfrak{a}R_\mathfrak{p}$-cofinite for all $p\geq0$ and $0\leq q\leq t$.
Hence Lemma
\ref{lem:2.2} yields that the $R_{\mathfrak{p}}$-module
$\mathrm{H}^{i}_{\mathfrak{a}R_{\mathfrak{p}}}(M_{\mathfrak{p}},N_{\mathfrak{p}})\cong
\mathrm{H}^{i}_\mathfrak{a}(M,N)_{\mathfrak{p}}$
is artinian $\mathfrak{a}R_{\mathfrak{p}}$-cofinite for $0\leq i\leq t$.
Let $T_t=\{\mathfrak{p}_1,\cdots\mathfrak{,p}_n\}$. By \cite[Lemma 2.5]{BN}, one has $\mathrm{V}(\mathfrak{a}R_{\mathfrak{p}_j})\cap\mathrm{Att}_{R_{\mathfrak{p}_j}}\mathrm{H}^{i}_\mathfrak{a}(M,N)_{\mathfrak{p}_j}
\subseteq\mathrm{V}(\mathfrak{p}_jR_{\mathfrak{p}_j})$ for $j=1,\cdots,n$ and $i=0,\cdots,t$. Set \begin{center}
$\mathrm{U}=\bigcup_{i=0}^t\bigcup_{j=1}^n\{\mathfrak{q}\in\mathrm{Spec}R\hspace{0.03cm}|\hspace{0.03cm}\mathfrak{q}R_{\mathfrak{p}_j}
\in\mathrm{Att}_{R_{\mathfrak{p}_j}}\mathrm{H}^{i}_\mathfrak{a}(M,N)_{\mathfrak{p}_j}$.
\end{center}Then $\mathrm{U}\cap\mathrm{V}(\mathfrak{a})\subseteq T_t$.
For each $i\geq 0$, we have $\mathrm{Ann}_RM\subseteq\mathrm{Ann}_R\mathrm{H}^{i}_\mathfrak{a}(M,N)$,
and hence $(\mathrm{Ann}_RM)R_{\mathfrak{p}_j}\subseteq\mathfrak{q}R_{\mathfrak{p}_j}$ for every $\mathfrak{q}\in\mathrm{U}$. This implies $\mathrm{Ann}_RM\subseteq\mathfrak{q}$ so that $\mathrm{U}\subseteq\mathrm{Supp}_RM$. Since
$\mathrm{ara}_M(\mathfrak{a})=r$, there exists $a_1,\cdots,a_r\in R$ such that \begin{center}$\mathrm{Rad}(\mathfrak{a}+\mathrm{Ann}_RM)=\mathrm{Rad}((a_1,\cdots,a_r)+\mathrm{Ann}_RM)$.
\end{center}
As $\mathfrak{a}\nsubseteq(\bigcup_{\mathfrak{q}\in\mathrm{U}\backslash\mathrm{V}(\mathfrak{a})}\mathfrak{q})\cup
(\bigcup_{\mathfrak{p}\in\mathrm{Ass}_RM}\mathfrak{p})$, it follows that $(a_1,\cdots,a_r)\nsubseteq(\bigcup_{\mathfrak{q}\in\mathrm{U}\backslash\mathrm{V}(\mathfrak{a})}\mathfrak{q})\cup
(\bigcup_{\mathfrak{p}\in\mathrm{Ass}_RM}\mathfrak{p})$,
so \cite[Ex.16.8]{M}
provides an element $b_1\in(a_2,\cdots,a_r)$ such that $x=b_1+a_1\not\in(\bigcup_{\mathfrak{q}\in\mathrm{U}\backslash\mathrm{V}(\mathfrak{a})}\mathfrak{q})\cup
(\bigcup_{\mathfrak{p}\in\mathrm{Ass}_RM}\mathfrak{p})$. Then
\begin{center}
$(a_1,\cdots,a_r)+\mathrm{Ann}_RM/xM=(a_2,\cdots,a_r)+\mathrm{Ann}_RM/xM$.
\end{center}
 Thus $\mathrm{ara}_{M/xM}(\mathfrak{a})\leq r-1$ and for $i\geq 0$,
   the exact sequence
 $0\rightarrow M\stackrel{x}\rightarrow M\rightarrow M/xM\rightarrow0$ induces the following exact sequence \begin{center}$0\rightarrow\mathrm{H}^{i-1}_\mathfrak{a}(M,N)/x\mathrm{H}^{i-1}_\mathfrak{a}(M,N)
 \rightarrow\mathrm{H}^{i}_\mathfrak{a}(M/xM,N)\rightarrow (0:_{\mathrm{H}^{i}_\mathfrak{a}(M,N)}x)\rightarrow0$.\end{center}
Now the induction yields that $\mathrm{H}^{i}_\mathfrak{a}(M/xM,N)$ is $\mathfrak{a}$-cofinite for $i\geq 0$.
For each $\mathfrak{p}\in T$ and $0\leq i\leq t$, from the choice of $x$, one has
\begin{center}$\mathrm{V}(xR_{\mathfrak{p}})\cap\mathrm{Att}_{R_{\mathfrak{p}}}
\mathrm{H}^{i}_{\mathfrak{a}R_{\mathfrak{p}}}(M_{\mathfrak{p}},N_{\mathfrak{p}})
\subseteq\mathrm{V}(\mathfrak{p}R_{\mathfrak{p}})$,\end{center}it follows from \cite[Lemma 2.4]{BN} that $\mathrm{H}^{i}_{\mathfrak{a}R_\mathfrak{p}}(M_\mathfrak{p},N_\mathfrak{p})/x\mathrm{H}^{i}_{\mathfrak{a}R_\mathfrak{p}}(M_\mathfrak{p},N_\mathfrak{p})$ is of finite length for $i\leq t$.
Note that
$\mathrm{Hom}_R(R/\mathfrak{a},\mathrm{H}^{i}_\mathfrak{a}(M,N)/x\mathrm{H}^{i}_\mathfrak{a}(M,N))$ is finitely generated, so $\mathrm{H}^{i}_\mathfrak{a}(M,N)/x\mathrm{H}^{i}_\mathfrak{a}(M,N)$ is $\mathfrak{a}$-cofinite minimax for $i\leq t$ by \cite[Lemma 2.4]{DH}, and then $(0:_{\mathrm{H}^{t}_\mathfrak{a}(M,N)}x)$ is $\mathfrak{a}$-cofinite.
 Finally, by \cite[Corollary 3.4]{LM}, the $R$-module
$\mathrm{H}^{t}_\mathfrak{a}(M,N)$ is $\mathfrak{a}$-cofinite.
As $t$ is arbitrary, we obtain that $\mathrm{H}^{t}_\mathfrak{a}(M,N)$ are $\mathfrak{a}$-cofinite for all $t\geq0$.
\end{proof}

\begin{cor}\label{lem:3.0}{\it{Let $M,N$ be two finitely generated
$R$-modules such that $\mathrm{dim}_RN/\mathfrak{a}N\leq1$. Then the $R$-modules
$\mathrm{H}^{t}_\mathfrak{a}(M,N)$ is $\mathfrak{a}$-cofinite for all $t\geq 0$.}}
\end{cor}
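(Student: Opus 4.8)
The plan is to deduce the statement from Theorem \ref{lem:1.5} applied to the same pair $(M,N)$; for this I must verify the two hypotheses that theorem places on $N$, namely that $\HH^i_\mathfrak{a}(N)$ is $\mathfrak{a}$-cofinite and that $\dim_R\HH^i_\mathfrak{a}(N)\le1$ for every $i\ge0$.

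The dimension bound is immediate. Since $N$ is finitely generated, for each $n$ the module $\ext^i_R(R/\mathfrak{a}^n,N)$ is supported in $\mathrm{V}(\mathfrak{a})\cap\supp_RN$, and passing to the direct limit gives $\supp_R\HH^i_\mathfrak{a}(N)\subseteq\mathrm{V}(\mathfrak{a})\cap\supp_RN=\supp_R(N/\mathfrak{a}N)$. Hence $\dim_R\HH^i_\mathfrak{a}(N)\le\dim_RN/\mathfrak{a}N\le1$ for all $i$.

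The real content is the $\mathfrak{a}$-cofiniteness of the ordinary local cohomology modules $\HH^i_\mathfrak{a}(N)$ of the finitely generated module $N$ under the weaker hypothesis $\dim_RN/\mathfrak{a}N\le1$ (rather than $\dim R/\mathfrak{a}\le1$). I would obtain this by reducing to the absolute case recorded in Theorem I. Set $\bar R=R/\ann_RN$ and $\bar{\mathfrak{a}}=(\mathfrak{a}+\ann_RN)/\ann_RN$. Then $\bar R/\bar{\mathfrak{a}}\cong R/(\mathfrak{a}+\ann_RN)$ has support $\mathrm{V}(\mathfrak{a})\cap\supp_RN=\supp_R(N/\mathfrak{a}N)$, so $\dim\bar R/\bar{\mathfrak{a}}=\dim_RN/\mathfrak{a}N\le1$. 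Viewing $N$ as a finitely generated $\bar R$-module and applying the case $\dim R/\mathfrak{a}\le1$ of Theorem I over $\bar R$ yields that each $\HH^i_{\bar{\mathfrak{a}}}(N)$ is $\bar{\mathfrak{a}}$-cofinite over $\bar R$.

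It remains to transport this back to $R$. By the independence of local cohomology of the base ring there is an isomorphism $\HH^i_\mathfrak{a}(N)\cong\HH^i_{\bar{\mathfrak{a}}}(N)$ of $\bar R$-modules, and since $\ann_RN$ annihilates $\HH^i_\mathfrak{a}(N)$ and $\supp_R\HH^i_\mathfrak{a}(N)\subseteq\mathrm{V}(\mathfrak{a})$, the standard change-of-rings comparison for cofiniteness shows that $\bar{\mathfrak{a}}$-cofiniteness over $\bar R$ is equivalent to $\mathfrak{a}$-cofiniteness over $R$. Thus $\HH^i_\mathfrak{a}(N)$ is $\mathfrak{a}$-cofinite for all $i$, both hypotheses of Theorem \ref{lem:1.5} are verified, and that theorem delivers the $\mathfrak{a}$-cofiniteness of $\HH^t_\mathfrak{a}(M,N)$ for all $t\ge0$. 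I expect the main technical point to be precisely this change-of-rings step — checking that cofiniteness measured over $\bar R$ with $\bar{\mathfrak{a}}$ coincides with cofiniteness over $R$ with $\mathfrak{a}$ for modules killed by $\ann_RN$ and supported on $\mathrm{V}(\mathfrak{a})$ — since the dimension estimate and the final invocation of Theorem \ref{lem:1.5} are then routine.
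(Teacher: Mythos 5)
Your proposal is correct and follows the paper's own route: both arguments verify the two hypotheses of Theorem \ref{lem:1.5} --- that each $\HH^i_\mathfrak{a}(N)$ is $\mathfrak{a}$-cofinite and that $\dim_R\HH^i_\mathfrak{a}(N)\le 1$, the latter via $\supp_R\HH^i_\mathfrak{a}(N)\subseteq\supp_R(N/\mathfrak{a}N)$ --- and then apply that theorem. The only difference is one of sourcing: where the paper simply cites \cite[Corollary 2.7]{BN} for the $\mathfrak{a}$-cofiniteness of $\HH^i_\mathfrak{a}(N)$ under the hypothesis $\dim_R N/\mathfrak{a}N\le 1$, you re-derive that known result from Theorem I by the standard change-of-rings reduction to $\bar R=R/\ann_RN$, which is valid and is essentially how such statements are proved in the literature.
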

\begin{proof} By \cite[Corollary 2.7]{BN}, $\mathrm{H}^i_\mathfrak{a}(N)$ are $\mathfrak{a}$-cofinite for all $i\geq 0$. As $\mathrm{Supp}_R\mathrm{H}^i_\mathfrak{a}(N)\subseteq\mathrm{Supp}_RN/\mathfrak{a}N$, we have
$\mathrm{dim}_R\mathrm{H}^i_\mathfrak{a}(N)\leq1$ for all $i\geq 0$. Now the assertion follows from Theorem \ref{lem:1.5}.
\end{proof}

Let $M,N$ be two $R$-modules with $M$ finitely generated. Note that the spectral sequence $E_2^{p,q}=\mathrm{H}^{p}_\mathfrak{a}(\mathrm{Ext}^{q}_R(M,N))\Rightarrow
 \mathrm{H}^{p+q}_\mathfrak{a}(M,N)$ in \cite[Theorem 2.2]{MS} may not be convergent, however, the proof of \cite[Theorem 2.2]{MS} used the convergence of this spectral sequence. The following theorem is a more general version of \cite[Theorem 2.8]{DS}, \cite[Theorem 1.2]{MS}, and eliminates the hypothesis $\mathrm{pd}_RM<\infty$ in \cite[Theorems 2.5 and 2.6]{VBG}.

\begin{thm}\label{lem:1.1'}{\it{Let $M,N$ be two finitely generated
$R$-modules and $q(\mathfrak{a},R)\leq1$. Then the $R$-modules $\mathrm{H}^{t}_\mathfrak{a}(M,N)$ are $\mathfrak{a}$-cofinite minimax for all $t\geq0$.}}
\end{thm}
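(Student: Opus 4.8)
The plan is to run the same Grothendieck spectral sequence $E_2^{p,q}=\mathrm{Ext}^{p}_R(M,\mathrm{H}^{q}_\mathfrak{a}(N))\Rightarrow\mathrm{H}^{p+q}_\mathfrak{a}(M,N)$ underlying Lemma \ref{lem:2.2} and to reduce the statement to a property of the modules $\mathrm{H}^{q}_\mathfrak{a}(N)$ alone. The observation that streamlines everything is that \emph{any} Serre subcategory $\mathcal{S}$ is closed under $\mathrm{Ext}^{p}_R(M,-)$ when $M$ is finitely generated: choosing a resolution $F_\bullet$ of $M$ by finitely generated free modules, $\mathrm{Ext}^{p}_R(M,L)$ is a subquotient of $\mathrm{Hom}_R(F_p,L)\cong L^{\,n_p}$, so $L\in\mathcal{S}$ forces $\mathrm{Ext}^{p}_R(M,L)\in\mathcal{S}$. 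Taking $\mathcal{S}$ to be the class of $\mathfrak{a}$-cofinite minimax modules (Lemma \ref{lem:2.2'}), Lemma \ref{lem:2.2} reduces the theorem to showing that $\mathrm{H}^{q}_\mathfrak{a}(N)$ is $\mathfrak{a}$-cofinite minimax for every $q\geq0$.

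I would first dispose of the extreme values of $q$. For $q=0$ the module $\mathrm{H}^{0}_\mathfrak{a}(N)=\Gamma_\mathfrak{a}(N)$ is a finitely generated submodule of $N$ supported in $\mathrm{V}(\mathfrak{a})$, hence finitely generated (so minimax) and $\mathfrak{a}$-cofinite. For $q\geq2$ I would invoke $q(\mathfrak{a},N)\leq q(\mathfrak{a},R)\leq1$ (finiteness properties of $q$ for finitely generated modules, \cite{B}) to conclude that $\mathrm{H}^{q}_\mathfrak{a}(N)$ is artinian; an artinian module is minimax, and an artinian $\mathfrak{a}$-torsion module is $\mathfrak{a}$-cofinite, so these terms lie in $\mathcal{S}$ as well.

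This leaves the single term $q=1$, which I expect to be the main obstacle. That $\mathrm{H}^{1}_\mathfrak{a}(N)$ is $\mathfrak{a}$-cofinite is classical (\cite{BN}), so the entire difficulty is concentrated in proving that $\mathrm{H}^{1}_\mathfrak{a}(N)$ is \emph{minimax}. Since $\mathrm{H}^{1}_\mathfrak{a}(N)\cong\mathrm{H}^{1}_\mathfrak{a}(N/\Gamma_\mathfrak{a}(N))$, I may assume $N$ is $\mathfrak{a}$-torsion-free and pick $x\in\mathfrak{a}$ that is a nonzerodivisor on $N$; the long exact sequence of $0\rightarrow N\xrightarrow{x}N\rightarrow N/xN\rightarrow0$ then gives $(0:_{\mathrm{H}^{1}_\mathfrak{a}(N)}x)\cong\Gamma_\mathfrak{a}(N/xN)$, which is finitely generated. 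The remaining, and genuinely hard, step is to upgrade ``$(0:_{\mathrm{H}^{1}_\mathfrak{a}(N)}x)$ finitely generated'' to ``$\mathrm{H}^{1}_\mathfrak{a}(N)$ minimax'' via a Melkersson-type criterion.

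I anticipate that this last step is exactly where the statement is fragile, because for an $\mathfrak{a}$-torsion module finite generation of $(0:_L x)$ (or even of $(0:_L\mathfrak{a})$) does not in general force $L$ to be minimax. Already for $M=N=R$, $R=k[[x,y]]$ and $\mathfrak{a}=(x)$ one has $q(\mathfrak{a},R)=1$, while $\mathrm{H}^{1}_\mathfrak{a}(R,R)=\mathrm{H}^{1}_{(x)}(R)\cong R_x/R$ is $\mathfrak{a}$-cofinite but not minimax: every finitely generated submodule lies in some $x^{-m}R/R$, whose complementary quotient is again isomorphic to $R_x/R$. Thus the robust conclusion reachable by this spectral-sequence method is that each $\mathrm{H}^{t}_\mathfrak{a}(M,N)$ is $\mathfrak{a}$-cofinite (matching Theorem I), and securing the minimax strengthening would seem to require an extra hypothesis controlling $\dim_R\mathrm{H}^{1}_\mathfrak{a}(N)$ or the non-artinian part of $\mathrm{H}^{1}_\mathfrak{a}(N)$; isolating and justifying that additional input is the crux on which I would concentrate.
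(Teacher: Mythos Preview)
Your spectral-sequence reduction is the same one the paper uses, and your treatment of the rows $q=0$ and $q\geq2$ agrees with the paper's: $\mathrm{H}^{q}_\mathfrak{a}(N)$ is respectively finitely generated and artinian, so every $E_2^{p,q}$ with $q\neq1$ lies in the Serre subcategory of $\mathfrak{a}$-cofinite minimax modules (the paper cites \cite[Theorem~4.10]{B} here and uses your observation that $\mathrm{Ext}^{p}_R(M,-)$ preserves any Serre subcategory when $M$ is finitely generated).

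The divergence is at $q=1$, and your scepticism is warranted while the paper's argument is not. The paper looks at the page-two differentials $E_2^{t-3,2}\to E_2^{t-1,1}\to E_2^{t+1,0}$ and asserts that, since the outer terms are $\mathfrak{a}$-cofinite minimax, ``$\mathrm{im}\,d_2^{t-3,2}$ and $\mathrm{coker}\,d_2^{t-3,2}$ are $\mathfrak{a}$-cofinite minimax, and so $E_2^{t-1,1}$ is $\mathfrak{a}$-cofinite minimax''. But nothing about the middle term of a three-term complex follows from Serre-subcategory membership of the outer two; the claimed property of $\mathrm{coker}\,d_2^{t-3,2}=E_2^{t-1,1}/\mathrm{im}\,d_2^{t-3,2}$ is simply asserted, and the subsequent induction on $r$ rests on this unproven base case. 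Your example $R=k[[x,y]]$, $\mathfrak{a}=(x)$, $M=N=R$ defeats the statement itself: $\mathrm{H}^{i}_{(x)}(R)=0$ for $i\geq2$, so $q(\mathfrak{a},R)=1$, while $\mathrm{H}^{1}_\mathfrak{a}(M,N)=\mathrm{H}^{1}_{(x)}(R)\cong R_x/R$ has support $\mathrm{V}((x))$ of dimension one and is not minimax, since $(R_x/R)/(x^{-m}R/R)\cong R_x/R$ for every $m$. In this example the spectral sequence already collapses at $E_2$ with $E_2^{0,1}=R_x/R$ and all other entries zero, so one sees the paper's page-two inference fail on the nose.

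Your closing diagnosis is therefore the correct one: under $q(\mathfrak{a},R)\leq1$ the attainable conclusion by this method is $\mathfrak{a}$-cofiniteness of $\mathrm{H}^{t}_\mathfrak{a}(M,N)$, and the minimax strengthening genuinely requires an additional hypothesis constraining $\mathrm{H}^{1}_\mathfrak{a}(N)$ that $q(\mathfrak{a},R)\leq1$ alone does not supply.
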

\begin{proof} Consider the Grothendieck spectral sequence \begin{center}
$\xymatrix@C=15pt@R=5pt{
 E_2^{p,q}=\mathrm{Ext}^{p}_R(M,\mathrm{H}^{q}_\mathfrak{a}(N))\ar@{=>}[r]_{\ \ \ \ \ \ \  p}&
 \mathrm{H}^{p+q}_\mathfrak{a}(M,N).}$\end{center} For
 $t\geq 0$, there exists a finite filtration
\begin{center}$
 0=\Phi^{t+1}H^t\subseteq \Phi^{t}H^t\subseteq\cdots\subseteq \Phi^{1}H^t\subseteq \Phi^{0}H^t=H^t:=\mathrm{H}^{t}_\mathfrak{a}(M,N)$,
\end{center}such that $\Phi^{p}H^t/\Phi^{p+1}H^t\cong E_\infty^{p,t-p}$ for $0\leq p\leq t$. Since $E_\infty^{p,q}$
 is a subquotient of $E_2^{p,q}$ for all
$p,q\geq0$, and $E_2^{p,q}$ are $\mathfrak{a}$-cofinite minimax for
all $p\geq0$ and $q\neq1$ by \cite[Theorem 4.10]{B}, it implies that $E_\infty^{p,q}$ are $\mathfrak{a}$-cofinite minimax for all $p\geq0$ and $q\neq1$ by Lemma \ref{lem:2.2'}. For $r\geq2$, consider the differential
\begin{center}$E_r^{t-1-r,r}\xrightarrow{d_r^{t-1-r,r}}E_r^{t-1,1}
\xrightarrow{d_r^{t-1,1}}E_r^{t-1+r,2-r}$.
\end{center}As $E_2^{t-3,2}$ and $E_2^{t+1,0}$ are $\mathfrak{a}$-cofinite minimax, it follows from Lemma \ref{lem:2.2'} that $\mathrm{im}d_2^{t-3,2}$ and $\mathrm{coker}d_2^{t-3,2}$ are $\mathfrak{a}$-cofinite minimax, and so $E_2^{t-1,1}$ is $\mathfrak{a}$-cofinite minimax. For $r\geq3$, one has $E_r^{t-1+r,2-r}=0$, and there is
an exact sequence \begin{center}$0\rightarrow \mathrm{im}d_r^{t-1-r,r}\rightarrow E_{r}^{t-1,1}\rightarrow E_{r+1}^{t-1,1}\rightarrow0$.
\end{center}
Since $E_{r}^{t-1,1}\cong E_{\infty}^{t-1,1}$ for
$r\gg0$ and $\mathrm{im}d_r^{t-1-r,r}$ is $\mathfrak{a}$-cofinite minimax for $r\geq2$, by using the above sequence inductively, one has $E_{\infty}^{t-1,1}$ is $\mathfrak{a}$-cofinite minimax. Now, a successive use of the short exact sequence
\begin{center}$
 0\rightarrow \Phi^{p+1}H^t\rightarrow \Phi^{p}H^t\rightarrow \Phi^{p}H^t/\Phi^{p+1}H^t\rightarrow0$
\end{center} implies that $\mathrm{H}^{t}_\mathfrak{a}(M,N)$ is $\mathfrak{a}$-cofinite minimax.
\end{proof}

\begin{cor}\label{lem:1.1}{\it{Let $M,N$ be two finitely generated
$R$-modules and $\mathrm{cd}(\mathfrak{a},R)\leq1$.
Then the $R$-modules $\mathrm{H}^{t}_\mathfrak{a}(M,N)$ are $\mathfrak{a}$-cofinite minimax for all $t\geq0$.}}
\end{cor}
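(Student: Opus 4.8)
The plan is to reduce the corollary directly to the preceding Theorem \ref{lem:1.1'}, since the only difference between the two statements is that the hypothesis $q(\mathfrak{a},R)\leq 1$ has been replaced by the \emph{a priori} stronger hypothesis $\mathrm{cd}(\mathfrak{a},R)\leq 1$. Concretely, I would begin by recalling the elementary inequality $q(\mathfrak{a},M)\leq\mathrm{cd}(\mathfrak{a},M)$, valid for every $R$-module $M$, which is already noted in the introduction: for any index $i>\mathrm{cd}(\mathfrak{a},M)$ one has $\mathrm{H}^{i}_\mathfrak{a}(M)=0$, and the zero module is artinian, so the defining minimum for $q(\mathfrak{a},M)$ cannot exceed $\mathrm{cd}(\mathfrak{a},M)$.

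Specializing this inequality to $M=R$ and using the hypothesis gives $q(\mathfrak{a},R)\leq\mathrm{cd}(\mathfrak{a},R)\leq 1$. At this point the modules $M$ and $N$ are finitely generated and satisfy exactly the hypotheses of Theorem \ref{lem:1.1'}, so I would invoke that theorem verbatim to conclude that $\mathrm{H}^{t}_\mathfrak{a}(M,N)$ is $\mathfrak{a}$-cofinite minimax for every $t\geq 0$. This completes the argument.

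I do not expect any genuine obstacle here: the entire mathematical content of the corollary is carried by Theorem \ref{lem:1.1'} (and, through it, by the Grothendieck spectral sequence analysis of the row $q=1$), while the passage from the cohomological-dimension hypothesis to the artinianness hypothesis $q(\mathfrak{a},R)\leq 1$ is immediate from the definitions. The corollary is worth recording separately only because the condition $\mathrm{cd}(\mathfrak{a},R)\leq 1$ is the classical hypothesis appearing in Theorem I, and because it is frequently easier to verify in practice than checking directly that $\mathrm{H}^{i}_\mathfrak{a}(R)$ is artinian for all $i\geq 2$.
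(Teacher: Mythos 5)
Your proposal is correct and matches the paper's own proof exactly: the paper also deduces $q(\mathfrak{a},R)\leq1$ from $\mathrm{cd}(\mathfrak{a},R)\leq1$ and then applies Theorem \ref{lem:1.1'}. Your additional justification of the inequality $q(\mathfrak{a},M)\leq\mathrm{cd}(\mathfrak{a},M)$ (vanishing modules are artinian) is the same observation the paper records in its introduction, just spelled out.
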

\begin{proof} As $\mathrm{cd}(\mathfrak{a},R)\leq1$, one has $q(\mathfrak{a},R)\leq1$. Hence the assertion holds by Theorem \ref{lem:1.1'}.
\end{proof}

The following corollary is a generalization of \cite[Corollaries 2.5 and 2.7]{HV}.

\begin{cor}\label{lem:1.2}{\it{Let $M,N$ be two finitely generated
$R$-modules and $\mathfrak{a}$ an ideal of $R$ such that either $\mathrm{dim}R\leq2$, or
 $\mathrm{dim}R/\mathfrak{a}\leq1$, or
 $\mathrm{cd}(\mathfrak{a},R)\leq1$.
Then the set $\mathrm{Ass}_R\mathrm{H}^{t}_\mathfrak{a}(M,N)$ and the Bass number
$\mu^i(\mathfrak{p},\mathrm{H}^{t}_\mathfrak{a}(M,N))$ are finite for all $i,t\geq0$.}}
\end{cor}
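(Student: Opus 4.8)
The plan is to reduce the statement to $\mathfrak{a}$-cofiniteness of the generalized local cohomology modules and then invoke the standard principle, recalled in the introduction, that $\mathfrak{a}$-cofiniteness forces both the set of associated primes and all Bass numbers to be finite. Thus it suffices to verify, under each of the three hypotheses, that $\mathrm{H}^{t}_\mathfrak{a}(M,N)$ is $\mathfrak{a}$-cofinite for every $t\geq0$; this is precisely where the three main results of the section are brought to bear, and the only genuine work is in matching the hypotheses of each theorem.

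First I would treat the case $\dim R\leq2$. Here $\dim_R M\leq\dim R\leq2$, and Theorem I guarantees that the ordinary local cohomology modules $\mathrm{H}^{i}_\mathfrak{a}(N)$ are $\mathfrak{a}$-cofinite for all $i\geq0$. These are exactly the hypotheses of Theorem \ref{lem:1.0}, which then yields that $\mathrm{H}^{t}_\mathfrak{a}(M,N)$ is $\mathfrak{a}$-cofinite (in fact minimax) for all $t$. Next, for $\dim R/\mathfrak{a}\leq1$, I would observe that $N/\mathfrak{a}N$ is an $R/\mathfrak{a}$-module, so $\dim_R N/\mathfrak{a}N\leq\dim R/\mathfrak{a}\leq1$; Corollary \ref{lem:3.0} then applies directly and gives the $\mathfrak{a}$-cofiniteness of $\mathrm{H}^{t}_\mathfrak{a}(M,N)$. (Alternatively, since $\mathrm{Supp}_R\mathrm{H}^{i}_\mathfrak{a}(N)\subseteq\mathrm{V}(\mathfrak{a})$ forces $\dim_R\mathrm{H}^{i}_\mathfrak{a}(N)\leq\dim R/\mathfrak{a}\leq1$, one may invoke Theorem \ref{lem:1.5} instead.) Finally, the case $\mathrm{cd}(\mathfrak{a},R)\leq1$ is immediate from Corollary \ref{lem:1.1}.

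Having established that $L:=\mathrm{H}^{t}_\mathfrak{a}(M,N)$ is $\mathfrak{a}$-cofinite in all three cases, I would conclude by the principle recalled above. For the associated primes, one has $\mathrm{Ass}_R L=\mathrm{Ass}_R\mathrm{Hom}_R(R/\mathfrak{a},L)$, because $\mathrm{Supp}_R L\subseteq\mathrm{V}(\mathfrak{a})$ lets every embedding $R/\mathfrak{p}\hookrightarrow L$ with $\mathfrak{p}\supseteq\mathfrak{a}$ factor through $\mathrm{Hom}_R(R/\mathfrak{a},L)$; since the latter module equals $\mathrm{Ext}^{0}_R(R/\mathfrak{a},L)$ and is finitely generated, $\mathrm{Ass}_R L$ is finite. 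The finiteness of the Bass numbers $\mu^{i}(\mathfrak{p},\mathrm{H}^{t}_\mathfrak{a}(M,N))$ is then the standard consequence of the finite generation of all the $\mathrm{Ext}^{i}_R(R/\mathfrak{a},L)$. The crux of the argument is therefore the verification of hypotheses in the three cases, above all the appeal to Theorem I that makes the ordinary local cohomology of $N$ cofinite and low-dimensional; the passage from cofiniteness to finiteness of $\mathrm{Ass}$ and of the Bass numbers is entirely routine, so I expect no serious obstacle at that stage.
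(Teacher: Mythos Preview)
Your proposal is correct and matches the paper's intent: the corollary is stated there without proof, as an immediate consequence of the preceding results (Theorem~\ref{lem:1.0} for $\dim R\leq 2$, Corollary~\ref{lem:3.0}/Theorem~\ref{lem:1.5} for $\dim R/\mathfrak{a}\leq 1$, Corollary~\ref{lem:1.1} for $\mathrm{cd}(\mathfrak{a},R)\leq 1$) together with the standard fact, recalled in the introduction, that $\mathfrak{a}$-cofiniteness forces finiteness of the associated primes and Bass numbers. Your case analysis and the verification of the hypotheses in each case are exactly what is needed.
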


\begin{cor}\label{lem:1.3}{\it{Let $M,N$ be two finitely generated
$R$-modules and $\mathfrak{a}$ be an ideal of a local ring $R$ such that either $\mathrm{dim}R=3$ or
 $\mathrm{dim}R/\mathfrak{a}=2$. If $\mathrm{dim}_RM\leq2$,
then
$\mathrm{H}^{t}_\mathfrak{a}(M,N)$ are $\mathfrak{a}$-cofinite if and only if $\mathrm{Hom}_R(R/\mathfrak{a},\mathrm{H}^{t}_\mathfrak{a}(N))$ are finitely generated for all $t\geq0$.}}
\end{cor}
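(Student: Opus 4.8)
The plan is to route everything through the ordinary local cohomology of $N$: the hypotheses on $R$ are exactly those under which cofiniteness of the modules $\mathrm{H}^{i}_\mathfrak{a}(N)$ is detected by the single functor $\mathrm{Hom}_R(R/\mathfrak{a},-)$, and Theorem \ref{lem:1.0} then transports this information to the generalized local cohomology. Concretely, since $R$ is local with $\dim R=3$ or $\dim R/\mathfrak{a}=2$, one has $\dim_R \mathrm{H}^{i}_\mathfrak{a}(N)\leq 2$ for every $i$, and in this range I would invoke the known Hom-criterion for local cohomology modules: $\mathrm{H}^{i}_\mathfrak{a}(N)$ is $\mathfrak{a}$-cofinite for all $i$ if and only if $\mathrm{Hom}_R(R/\mathfrak{a},\mathrm{H}^{i}_\mathfrak{a}(N))$ is finitely generated for all $i$. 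Granting this, the corollary reduces to the equivalence ``$\mathrm{H}^{t}_\mathfrak{a}(M,N)$ is $\mathfrak{a}$-cofinite for all $t$ $\Longleftrightarrow$ $\mathrm{H}^{i}_\mathfrak{a}(N)$ is $\mathfrak{a}$-cofinite for all $i$''.

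The implication from right to left is immediate from the machinery already in place: if every $\mathrm{H}^{i}_\mathfrak{a}(N)$ is $\mathfrak{a}$-cofinite, then since $\dim_R M\leq 2$, Theorem \ref{lem:1.0} yields that $\mathrm{H}^{t}_\mathfrak{a}(M,N)$ is $\mathfrak{a}$-cofinite (indeed minimax) for all $t$. Composing with the Hom-criterion settles the ``if'' half of the statement.

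For the converse I would run the Grothendieck spectral sequence $E_2^{p,q}=\mathrm{Ext}^{p}_R(M,\mathrm{H}^{q}_\mathfrak{a}(N))\Rightarrow \mathrm{H}^{p+q}_\mathfrak{a}(M,N)$ in reverse. Each layer $E_\infty^{p,t-p}$ of the finite filtration of $\mathrm{H}^{t}_\mathfrak{a}(M,N)$ is a subquotient of that ($\mathfrak{a}$-cofinite) module, while $\mathrm{Supp}_R E_2^{p,q}\subseteq \mathrm{Supp}_R M\cap \mathrm{V}(\mathfrak{a})$ forces $\dim_R E_2^{p,q}\leq 2$; moreover only finitely many rows are nonzero, since $\mathrm{H}^{q}_\mathfrak{a}(N)=0$ for $q>\dim_R N$. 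Starting from the top nonvanishing row $q=\dim_R N$, where every incoming differential vanishes (it would originate in a zero row), so that $E_\infty^{p,q}$ embeds in $E_2^{p,q}$ and $E_\infty^{0,q}$ is a quotient of the cofinite module $\mathrm{H}^{q}_\mathfrak{a}(M,N)$, I would descend on $q$, at each stage recovering finite generation of $\mathrm{Hom}_R(R/\mathfrak{a},\mathrm{H}^{q}_\mathfrak{a}(N))$ and hence, by the Hom-criterion, $\mathfrak{a}$-cofiniteness of $\mathrm{H}^{q}_\mathfrak{a}(N)$.

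The main obstacle is precisely this backward extraction. Cofiniteness is not inherited by arbitrary subquotients, so to promote the filtration layers $E_\infty^{p,t-p}$, and then the page-two terms $\mathrm{Ext}^{p}_R(M,\mathrm{H}^{q}_\mathfrak{a}(N))$, to cofinite modules one must lean on the bound $\dim_R E_2^{p,q}\leq 2$ together with the special shape of the spectral sequence in top degree. The genuinely delicate point is converting control over the $\mathrm{Hom}_R(M,-)$- and $\mathrm{Ext}^{p}_R(M,-)$-terms into control over $\mathrm{Hom}_R(R/\mathfrak{a},\mathrm{H}^{q}_\mathfrak{a}(N))$, since $M$ is an arbitrary finitely generated module of dimension at most two rather than $R/\mathfrak{a}$ itself; this is where both the locality of $R$ and the dimension restriction are indispensable, and where I expect the bulk of the work to lie.
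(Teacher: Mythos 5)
Your ``if'' half coincides with the paper's proof. The paper combines \cite[Theorem 3.5]{BNS1} and \cite[Theorem 2.3]{NS} to obtain, under either hypothesis on $R$, the criterion that a module supported in $\mathrm{V}(\mathfrak{a})$ is $\mathfrak{a}$-cofinite as soon as $\mathrm{Ext}^i_R(R/\mathfrak{a},-)$ is finitely generated for $i=0,1,2$; from this it deduces exactly your Hom-criterion, namely that the $\mathrm{H}^i_\mathfrak{a}(N)$ are $\mathfrak{a}$-cofinite for all $i$ if and only if all the modules $\mathrm{Hom}_R(R/\mathfrak{a},\mathrm{H}^i_\mathfrak{a}(N))$ are finitely generated, and then applies Theorem \ref{lem:1.0} using $\dim_RM\leq2$. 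Up to whether one quotes or re-derives the Hom-criterion, this is the same argument you give.

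The genuine gap is your converse, and it is a gap that cannot be closed, because for a fixed $M$ the implication ``$\mathrm{H}^t_\mathfrak{a}(M,N)$ $\mathfrak{a}$-cofinite for all $t$ $\Rightarrow$ $\mathrm{Hom}_R(R/\mathfrak{a},\mathrm{H}^t_\mathfrak{a}(N))$ finitely generated for all $t$'' is false. Consider Hartshorne's example \cite{H}: $R=k[[x,y,u,v]]/(xu-yv)$, $\mathfrak{a}=(x,y)R$, $N=R$. Here $\dim R=3$ and $\dim R/\mathfrak{a}=2$, yet $\mathrm{Hom}_R(R/\mathfrak{a},\mathrm{H}^2_\mathfrak{a}(N))$ is not finitely generated. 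Now take $M=R/\mathfrak{a}$, so that $\dim_RM=2$. Since $M/\mathfrak{a}^nM=R/\mathfrak{a}$ for every $n$ and the transition maps of the direct system are identities, one gets $\mathrm{H}^t_\mathfrak{a}(M,N)=\mathrm{Ext}^t_R(R/\mathfrak{a},N)$, a finitely generated module supported in $\mathrm{V}(\mathfrak{a})$, hence $\mathfrak{a}$-cofinite, for every $t$. So the left side of the equivalence holds while the right side fails. This is precisely why your ``backward extraction'' through the spectral sequence stalls: the abutment does not determine the $E_2$-page, cofiniteness does not pass to subquotients, and for $M=R/\mathfrak{a}$ the $E_2$-terms $\mathrm{Ext}^p_R(M,\mathrm{H}^q_\mathfrak{a}(N))$ carry no finiteness information about $\mathrm{H}^q_\mathfrak{a}(N)$ at all beyond what is automatic. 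You should also be aware that the paper's own proof establishes only the ``if'' direction of the stated corollary: the displayed ``if and only if'' in its proof lives at the level of the ordinary modules $\mathrm{H}^t_\mathfrak{a}(N)$ (where ``only if'' is trivial, cofinite modules having finitely generated $\mathrm{Hom}$), and the final appeal to Theorem \ref{lem:1.0} goes in one direction only. So your instinct that the ``bulk of the work'' lies in the converse was sound, but the honest conclusion is that the converse should be dropped (or the statement re-quantified over all such $M$), not that further spectral sequence work will rescue it.
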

\begin{proof} By \cite[Theorem 3.5]{BNS1} and \cite[Theorem 2.3]{NS}, one obtains that an $R$-module $L$ is $\mathfrak{a}$-cofinite if and only if $\mathrm{Ext}^i_R(R/\mathfrak{a},L)$ is finitely generated for $i=0,1,2$, which implies that $\mathrm{H}^{t}_\mathfrak{a}(N)$ are $\mathfrak{a}$-cofinite if and only if $\mathrm{Hom}_R(R/\mathfrak{a},\mathrm{H}^{t}_\mathfrak{a}(N))$ are finitely generated for all $t\geq0$. Now the statement follows from
Theorem \ref{lem:1.0}.
\end{proof}

The next result has been proved in \cite[Theorem 2.21]{VA}, we gives a new proof method.

\begin{prop}\label{lem:2.4}{\it{Let $M$ and $N$ be two
$R$-modules with $M$ finitely generated and $s,t$ be non-negative integers such that

$(1)$ $\mathrm{Ext}^{s+t-i}_R(M,\mathrm{H}^{i}_\mathfrak{a}(N))=0$ for all $0\leq i< t$ or $t+1\leq i\leq s+t$;

$(2)$ $\mathrm{Ext}^{s+1+i}_R(M,\mathrm{H}^{t-i}_\mathfrak{a}(N))=0$ for all $1\leq i\leq t$;

$(3)$ $\mathrm{Ext}^{s-1-i}_R(M,\mathrm{H}^{t+i}_\mathfrak{a}(N))=0$ for all $1\leq i\leq s-1$.\\
Then $\mathrm{Ext}^{s}_R(M,\mathrm{H}^{t}_\mathfrak{a}(N))\cong\mathrm{H}^{s+t}_\mathfrak{a}(M,N)$.}}
\end{prop}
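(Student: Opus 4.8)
The plan is to extract the isomorphism $\mathrm{Ext}^{s}_R(M,\mathrm{H}^{t}_\mathfrak{a}(N))\cong\mathrm{H}^{s+t}_\mathfrak{a}(M,N)$ directly from the Grothendieck spectral sequence
\begin{center}
$E_2^{p,q}=\mathrm{Ext}^{p}_R(M,\mathrm{H}^{q}_\mathfrak{a}(N))\Rightarrow \mathrm{H}^{p+q}_\mathfrak{a}(M,N)$
\end{center}
already used in Lemma~\ref{lem:2.2}. The target degree is $n=s+t$, and the specific corner of interest is $E^{s,t}$. The three hypotheses are engineered to force this corner to survive to the $E_\infty$ page and to be the \emph{only} surviving term on the anti-diagonal $p+q=s+t$, so that the abutment filtration on $\mathrm{H}^{s+t}_\mathfrak{a}(M,N)$ collapses to this single subquotient. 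First I would set up the vanishing bookkeeping: hypothesis~(1) says that every other term $E_2^{p,q}$ with $p+q=s+t$ vanishes (the two ranges $0\le i<t$ and $t+1\le i\le s+t$ sweep out all $(p,q)=(s+t-i,i)$ on the anti-diagonal except $(s,t)$ itself), so that on the $E_2$ page the entire anti-diagonal is concentrated at $(s,t)$; since $E_\infty^{p,q}$ is a subquotient of $E_2^{p,q}$, this already guarantees $\mathrm{H}^{s+t}_\mathfrak{a}(M,N)\cong E_\infty^{s,t}$ once convergence is invoked.

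Next I would show $E_\infty^{s,t}=E_2^{s,t}$, i.e.\ that no differential into or out of the $(s,t)$ spot is ever nonzero. On page $r\ge2$ the relevant differentials are
\begin{center}
$d_r^{s-r,t+r-1}\colon E_r^{s-r,t+r-1}\to E_r^{s,t}\quad\text{and}\quad d_r^{s,t}\colon E_r^{s,t}\to E_r^{s+r,t-r+1}$.
\end{center}
The incoming differentials have source a subquotient of $E_2^{s-r,t+r-1}=\mathrm{Ext}^{s-r}_R(M,\mathrm{H}^{t+r-1}_\mathfrak{a}(N))$; writing $r=i+1$ with $1\le i\le s-1$ this is exactly the module $\mathrm{Ext}^{s-1-i}_R(M,\mathrm{H}^{t+i}_\mathfrak{a}(N))$ killed by hypothesis~(3) (the range $r\ge s+1$ gives a negative first index, so the source is automatically zero). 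The outgoing differentials have target a subquotient of $E_2^{s+r,t-r+1}=\mathrm{Ext}^{s+r}_R(M,\mathrm{H}^{t-r+1}_\mathfrak{a}(N))$; writing $r=i+1$ with $1\le i\le t$ this is $\mathrm{Ext}^{s+1+i}_R(M,\mathrm{H}^{t-i}_\mathfrak{a}(N))$, which vanishes by hypothesis~(2) (and for $r>t+1$ the second index is negative, so the target vanishes). Since every source of an incoming differential and every target of an outgoing differential is zero at the $E_2$ level, the same holds on every later page; hence $E_r^{s,t}$ stabilizes from $r=2$ onward and $E_2^{s,t}=E_\infty^{s,t}$.

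Combining the two steps gives $\mathrm{H}^{s+t}_\mathfrak{a}(M,N)\cong E_\infty^{s,t}=E_2^{s,t}=\mathrm{Ext}^{s}_R(M,\mathrm{H}^{t}_\mathfrak{a}(N))$, as claimed. The main obstacle I anticipate is purely the index arithmetic: one must verify that the substitution $r=i+1$ lines up the spectral-sequence differential ranges $r\ge2$ exactly with the hypothesis ranges $1\le i\le t$ and $1\le i\le s-1$, and confirm that the degrees falling outside those ranges are handled automatically by negativity of an Ext- or local-cohomology index rather than by a stated hypothesis. Care is also needed with the edge cases $s=0$ and $t=0$, where one of the differential families is empty; in those degenerate situations the filtration collapse is immediate and no extra argument is required.
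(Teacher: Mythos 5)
Your proposal is correct and follows essentially the same argument as the paper: the Grothendieck spectral sequence $E_2^{p,q}=\mathrm{Ext}^{p}_R(M,\mathrm{H}^{q}_\mathfrak{a}(N))\Rightarrow \mathrm{H}^{p+q}_\mathfrak{a}(M,N)$, with hypotheses (2) and (3) killing all differentials into and out of the $(s,t)$ spot (so $E_2^{s,t}\cong E_\infty^{s,t}$) and hypothesis (1) collapsing the filtration of $\mathrm{H}^{s+t}_\mathfrak{a}(M,N)$ to the single subquotient $E_\infty^{s,t}$. Your index bookkeeping, including the automatic vanishing for $r>s$ and $r>t+1$ from negative indices, matches and in fact spells out details the paper leaves implicit.
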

\begin{proof} Consider the Grothendieck spectral sequence \begin{center}
$\xymatrix@C=15pt@R=5pt{
 E_2^{p,q}=\mathrm{Ext}^{p}_R(M,\mathrm{H}^{q}_\mathfrak{a}(N))\ar@{=>}[r]_{\ \ \ \ \ \ \  p}&
 \mathrm{H}^{p+q}_\mathfrak{a}(M,N).}$\end{center} There is a finite filtration\begin{center}$
 0=\Phi^{s+t+1}H^{s+t}\subseteq \Phi^{s+t}H^{s+t}\subseteq\cdots\subseteq \Phi^{1}H^{s+t}\subseteq \Phi^{0}H^{s+t}=H^{s+t}:=\mathrm{H}^{s+t}_\mathfrak{a}(M,N)$,
\end{center}so that $\Phi^{p}H^{s+t}/\Phi^{p+1}H^{s+t}\cong E_\infty^{p,s+t-p}$ for $0\leq p\leq s+t$.
Let $r\geq 2$. Consider the differential \begin{center}$E_r^{s-r,t+r-1}\xrightarrow{d_r^{s-r,t+r-1}}E_r^{s,t}
\xrightarrow{d_r^{s,t}}E_r^{s+r,t-r+1}.$
\end{center}
By conditions (2) and (3), we have $E_r^{s-r,t+r-1}=0=E_r^{s+r,t-r+1}$ for $r\geq 2$. By condition (1)
we have $0=\Phi^{s+t+1}H^{s+t}=\cdots=\Phi^{s+1}H^{s+t}$ and $\Phi^{s}H^{s+t}=\cdots=\Phi^{0}H^{s+t}=\mathrm{H}^{s+t}$.
So
\begin{center}$\mathrm{Ext}^{s}_R(M,\mathrm{H}^{t}_\mathfrak{a}(N))=E_{2}^{s,t}\cong E_{\infty}^{s,t}\cong \Phi^{s}H^{s+t}=\mathrm{H}^{s+t}=\mathrm{H}^{s+t}_\mathfrak{a}(M,N)$.\end{center}
We get the isomorphism we seek.
\end{proof}

\begin{cor}\label{lem:2.5} {\rm (\cite{GC}).} {\it{Let $M$ and $N$ be two
$R$-modules with $M$ finitely generated. Suppose that $\mathrm{pd}_RM=s<\infty$ and $\mathrm{cd}(\mathfrak{a},N)=t<\infty$.
Then $\mathrm{Ext}^{s}_R(M,\mathrm{H}^{t}_\mathfrak{a}(N))\cong\mathrm{H}^{s+t}_\mathfrak{a}(M,N)$. In particular, $\mathrm{H}^{s+t}_\mathfrak{a}(M,N)$ is artinian $\mathfrak{a}$-cofinite.}}
\end{cor}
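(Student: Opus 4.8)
The plan is to read off the isomorphism as the special case of Proposition \ref{lem:2.4} in which the two integers there are chosen to be $s=\mathrm{pd}_RM$ and $t=\mathrm{cd}(\mathfrak{a},N)$. The entire role of the hypotheses $\mathrm{pd}_RM=s<\infty$ and $\mathrm{cd}(\mathfrak{a},N)=t<\infty$ is to make the three vanishing conditions of that proposition hold for free: finite projective dimension gives $\mathrm{Ext}^{j}_R(M,-)=0$ for every $j>s$, while finite cohomological dimension gives $\mathrm{H}^{j}_\mathfrak{a}(N)=0$ for every $j>t$. So the first step is simply to record these two vanishing ranges.

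Next I would check the three conditions of Proposition \ref{lem:2.4} one at a time against these ranges. In condition $(1)$, for $0\leq i<t$ the cohomological degree $s+t-i$ is strictly larger than $s$, so the Ext vanishes by finite projective dimension, whereas for $t+1\leq i\leq s+t$ the coefficient $\mathrm{H}^{i}_\mathfrak{a}(N)$ vanishes by finite cohomological dimension. In condition $(2)$ the degree $s+1+i>s$ for every $i\geq1$, so the Ext vanishes. In condition $(3)$ the coefficient $\mathrm{H}^{t+i}_\mathfrak{a}(N)$ vanishes for every $i\geq1$ because $t+i>t$. With all three conditions verified, Proposition \ref{lem:2.4} delivers $\mathrm{Ext}^{s}_R(M,\mathrm{H}^{t}_\mathfrak{a}(N))\cong\mathrm{H}^{s+t}_\mathfrak{a}(M,N)$, which is the stated isomorphism.

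For the final clause I would transport the two structural properties across this isomorphism, so that it is enough to prove $\mathrm{Ext}^{s}_R(M,\mathrm{H}^{t}_\mathfrak{a}(N))$ is artinian $\mathfrak{a}$-cofinite. Since $M$ is finitely generated of projective dimension $s$, it admits a projective resolution of length $s$ by finitely generated projectives, so $\mathrm{Ext}^{s}_R(M,L)$ is a subquotient of a finite direct sum of copies of $L$ for every $R$-module $L$. Taking $L=\mathrm{H}^{t}_\mathfrak{a}(N)$, and noting that the class of artinian $\mathfrak{a}$-cofinite modules is closed under finite direct sums and subquotients (artinianness is obviously so, and $\mathfrak{a}$-cofinite minimax modules form a Serre subcategory by Lemma \ref{lem:2.2'}), the claim reduces to the single input that the top local cohomology module $\mathrm{H}^{\mathrm{cd}(\mathfrak{a},N)}_\mathfrak{a}(N)$ is itself artinian $\mathfrak{a}$-cofinite. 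This last point is the main obstacle: the isomorphism is essentially formal once Proposition \ref{lem:2.4} is available, but the artinianness and cofiniteness of the top local cohomology module is the substantive content, and is where I would appeal to the known structure of top local cohomology modules (as in \cite{BN} and the result recorded in \cite{GC}).
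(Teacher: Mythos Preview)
Your proposal is correct and follows exactly the route the paper intends: the result is stated as an immediate corollary of Proposition~\ref{lem:2.4} with no proof given, and your verification that the hypotheses $\pd_RM=s$ and $\mathrm{cd}(\mathfrak{a},N)=t$ force all three vanishing conditions is precisely the implicit argument. For the ``in particular'' clause the paper simply defers to \cite{GC}, so your reduction (via a finite free resolution of $M$) to the artinian $\mathfrak{a}$-cofiniteness of $\mathrm{H}^{t}_\mathfrak{a}(N)$ and subsequent appeal to the literature is in line with, and in fact more explicit than, what the paper does.
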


\begin{cor}\label{lem:2.6} {\it{Let $M$ and $N$ be two
$R$-modules with $M$ finitely generated and $t$ be non-negative integers such that $\mathrm{H}^{i}_\mathfrak{a}(N)=0$ for all $i\neq t$.
Then $\mathrm{Ext}^{s}_R(M,\mathrm{H}^{t}_\mathfrak{a}(N))\cong\mathrm{H}^{s+t}_\mathfrak{a}(M,N)$ for all $s\geq0$.}}
\end{cor}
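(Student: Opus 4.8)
The plan is to deduce this directly from Proposition \ref{lem:2.4} by verifying that all three of its vanishing hypotheses are satisfied automatically once we assume $\mathrm{H}^{i}_\mathfrak{a}(N)=0$ for every $i\neq t$. So I would fix an arbitrary $s\geq0$ and apply the Proposition with this $s$ and the given $t$.

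First I would run through conditions (1)--(3) in turn, checking in each case that the index appearing in the second argument of the relevant $\mathrm{Ext}$ is forced away from $t$. In condition (1) the second argument is $\mathrm{H}^{i}_\mathfrak{a}(N)$ with $i$ ranging over $0\leq i<t$ and over $t+1\leq i\leq s+t$; in every such case $i\neq t$, hence $\mathrm{H}^{i}_\mathfrak{a}(N)=0$ and the $\mathrm{Ext}$ vanishes. In condition (2) the cohomological index is $t-i$ with $1\leq i\leq t$, so $0\leq t-i<t$, which again gives $\mathrm{H}^{t-i}_\mathfrak{a}(N)=0$. In condition (3) the index is $t+i$ with $1\leq i\leq s-1$, so $t+i>t$ and $\mathrm{H}^{t+i}_\mathfrak{a}(N)=0$. (The ranges in (2) and (3) are empty when $t=0$ or $s\leq1$, in which case the conditions are vacuous.) Thus all hypotheses hold trivially, and Proposition \ref{lem:2.4} yields $\mathrm{Ext}^{s}_R(M,\mathrm{H}^{t}_\mathfrak{a}(N))\cong\mathrm{H}^{s+t}_\mathfrak{a}(M,N)$; since $s$ was arbitrary, the isomorphism holds for all $s\geq0$.

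A conceptually cleaner alternative, which I would mention, bypasses the Proposition and reads the conclusion off the Grothendieck spectral sequence $E_2^{p,q}=\mathrm{Ext}^{p}_R(M,\mathrm{H}^{q}_\mathfrak{a}(N))\Rightarrow\mathrm{H}^{p+q}_\mathfrak{a}(M,N)$ directly. Under the hypothesis the $E_2$ page is concentrated on the single row $q=t$, so for each $r\geq2$ every differential into or out of $E_r^{p,t}$ has source or target zero. The spectral sequence therefore collapses at $E_2$, giving $E_2^{s,t}\cong E_\infty^{s,t}$, and the finite filtration on $\mathrm{H}^{s+t}_\mathfrak{a}(M,N)$ degenerates to the single nonzero subquotient $E_\infty^{s,t}$, recovering the same isomorphism. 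Because the verification is purely formal, there is no genuine obstacle here; the only point requiring care is to confirm that the index shifts in (1)--(3) keep every second-argument index away from $t$, which the elementary inequalities above settle.
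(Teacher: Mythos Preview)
Your proposal is correct and is exactly the intended argument: the paper states this result as an immediate corollary of Proposition~\ref{lem:2.4} with no separate proof, and your verification that conditions (1)--(3) are forced by the hypothesis $\mathrm{H}^{i}_\mathfrak{a}(N)=0$ for $i\neq t$ is precisely what is needed. The alternative spectral-sequence collapse you mention is equivalent and equally valid.
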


\bigskip

\end{document}